\newtheorem{theorem}{Theorem}[section]
\newtheorem{lemma}[theorem]{Lemma}
\newtheorem{corollary}[theorem]{Corollary}
\theoremstyle{definition}
\theoremstyle{remark}
\newtheorem{remark}[theorem]{Remark}
\numberwithin{equation}{section}
\begin{document}

\title[DSM for H\"{o}lder continuous monotone operators]{Dynamical Systems Method for solving nonlinear equations
with locally H\"{o}lder continuous monotone operators}

\author{N. S. Hoang$\dag$}
\address{Mathematics Department, Kansas State University,
Manhattan, KS 66506-2602, USA}
\email{nguyenhs@math.ksu.edu}
\thanks{The author thanks Professor A. G. Ramm for useful comments which improve the paper.}


\subjclass[2000]{47J05, 47J06, 47J35, 65R30}

\date{}


\keywords{Dynamical systems method (DSM),
nonlinear operator equations, monotone operators,
discrepancy principle.}

\begin{abstract}
A version of the Dynamical Systems Method for solving ill-posed
nonlinear equations with monotone and locally H\"{o}lder continuous operators is studied in this paper.
A discrepancy principle is proposed and justified under natural and weak assumptions. 
The only smoothness assumption on $F$ is the local H\"{o}lder continuity of order $\alpha>1/2$. 
\end{abstract}

\maketitle

\section{Introduction}

In this paper we study a version of the Dynamical Systems Method (DSM) 
for solving the equation
\begin{equation}
\label{aeq1}
F(u)=f,
\end{equation}
 where
$F$ is a nonlinear monotone operator in a 
real Hilbert space $H$, and equation \eqref{aeq1} is assumed solvable, 
possibly nonuniquely. 
An operator $F$ is called monotone if 
\begin{equation}
\label{ceq2}
\langle F(u)-F(v),u-v\rangle\ge 0,\quad \forall u,v\in H.
\end{equation}
Here, $\langle \cdot,\cdot\rangle$ denotes the inner product in $H$.
It is known (see, e.g., \cite{R499}), that the set 
$\mathcal{N}:=\{u:F(u)=f\}$ is closed and convex if 
$F$ is monotone and continuous. 
A closed and convex set in a Hilbert space has a unique 
minimal-norm element. This element in 
$\mathcal{N}$ we denote by $y$, $F(y)=f$, and call it the minimal-norm 
solution to equation (1). 
We assume in addition that 
$F$ is locally H\"{o}lder continuous of order $\alpha>1/2$, i.e.,
\begin{equation}
\label{eqholder}
\|F(u) - F(v)\| \le C_R\|u-v\|^\alpha,\qquad \forall u,v\in B(y,R).
\end{equation}
Assume that $f=F(y)$ is not known but
 $f_\delta$, the noisy data, are known, and $\|f_\delta-f\|\le \delta$. If $F'(u)$
 is not boundedly invertible then 
 solving equation \eqref{aeq1} for $u$ given noisy data $f_\delta$ is 
often (but not always) an ill-posed problem.
When $F$ is a linear bounded operator many methods for stable solution 
of \eqref{aeq1} were proposed (see \cite{I}--\cite{R499} and the references therein). 
When $F$ is nonlinear several methods have been proposed and studied 
(see, e.g.,  \cite{Hanke1}, \cite{Kal1}, \cite{KNS}, \cite{JT}, \cite{OR}, \cite{Tautenhahn}, \cite{Tautenhahn2} 
and references therein). 
The most frequently used and studied methods are regularized Newton-type and gradient-type methods. 
These methods requires the knowledge of the Fr\'{e}chet derivative of $F$. 
Therefore, they are not applicable  
 if $F$ is not Fr\'{e}chet differentiable. Our goal in this paper is to 
study a method for a stable solution to problem \eqref{aeq1} when $F$ is not  Fr\'{e}chet differentiable. 

In this paper we study a version of the Dynamical Systems Method (DSM) for solving \eqref{aeq1}. 
In the formulation given in \cite{R499},   
the DSM consists of finding a nonlinear map $\Phi(t,u)$ such that the Cauchy problem
$$
\dot{u}=\Phi(t,u),\qquad u(0)=u_0,
$$
has a unique solution for all $t\ge0$, there exists $\lim_{t\to\infty}u(t):=u(\infty)$,
and $F(u(\infty))=f$,
\begin{equation}
\label{23eq1}
\exists !\, \,u(t)\quad \forall t\ge 0;\qquad \exists u(\infty);\qquad F(u(\infty))=f.
\end{equation}
Various choices of $\Phi$ satisfying \eqref{23eq1} were proposed and justified in \cite{R499}.
Each such choice yields a version of the DSM.

The DSM for solving equation \eqref{aeq1} was extensively studied in 
\cite{R499}--\cite{469}. 
In \cite{R499}, the following version of the DSM was investigated
for monotone operators $F$:
\begin{equation}
\label{hichehic}
\dot{u}_\delta = -\big{(}F'(u_\delta) + 
a(t)I\big{)}^{-1}\big{(}F(u_\delta)+a(t)u_\delta - f_\delta\big{)},
\quad u_\delta(0)=u_0.
\end{equation}
The convergence of this method was justified with an {\it a apriori} 
choice of stopping rule in \cite{R499}. 
An {\it a posteriori} choice of stopping rule for this method was proposed and justified in \cite{R574}. 
Another version of the DSM with an {\it a posteriori}
choice of stopping rule was formulated and 
justified in \cite{R549}. 

In this paper we consider the following version of the DSM for 
a stable solution to equation \eqref{aeq1}:
\begin{equation}
\label{aeq2}
\dot{u}_\delta = -\big{(}F(u_\delta)+a(t)u_\delta - f_\delta\big{)},
\quad u_\delta(0)=u_0,
\end{equation}
where $F$ is a monotone continuous operator and $u_0\in H$. 
It is known that a local solution of \eqref{aeq2} exists under the assumption that 
$F$ is monotone continuous and $a(t)>0$ (see, e.g., \cite[p.99]{D} and \cite[p.165]{R499}). 
When $\delta = 0$ and $a(t)$ satisfies some conditions then it is known that 
the solution to \eqref{aeq2} exists globally (see, e.g., \cite[p.170]{R499}). 

The advantage of the method in \eqref{aeq2} compared with
the one in \eqref{hichehic} is the absence of the inverse operator in the algorithm, which
makes the algorithm \eqref{aeq2} less expensive than \eqref{hichehic}.
On the other hand, algorithm  \eqref{hichehic} converges faster than  
\eqref{aeq2} in many cases. 
Another advantage of the DSM \eqref{aeq2} is the applicability when $F$ is locally H\"{o}lder continuous of order $\alpha>0$ 
but not Fr\'{e}chet 
differentiable as shown in this paper.  

The convergence of the method \eqref{aeq2} for any initial value $u_0$ 
with an {\it a priori} choice of stopping rule was justified in \cite[p.170]{R499}.
%
In \cite{R550} the DSM \eqref{aeq2} with a stopping rule of Discrepancy Principle type was proposed and 
justified under the assumption that $F$ is Fr\'{e}chet differentiable. 
%
%
There, convergence of $u_\delta(t_\delta)$, chosen by a stopping rule of Discrepancy Principle 
type, is proved for the regularizing function $a(t) = d/(c+t)^b$ where $c\ge 1$, $b\in(0,1/2)$ 
and $d$ is sufficiently large. However, how large one should choose the parameter $d$ is not quantified in \cite{R550}. 

In this paper we study the DSM \eqref{aeq2} with the stopping rule proposed in \cite{R550} 
under weaker assumption on $F$ and for a larger class of regularizing function $a(t)$. 
The novel results in this paper include a justification of  
the DSM \eqref{aeq2} with our stopping rule for a stable solution to \eqref{aeq1} 
 under the assumption that $F$ is locally H\"{o}lder continuous of order $\alpha>1/2$. 
This condition is much weaker than the Fr\"{e}chet differentiability of $F$ which was used 
in \cite{R550}. 
Moreover, our results are justified for a larger class of regularizing function $a(t)$. 
The main results of this paper are Theorem~\ref{theorem1} and Theorem~\ref{theorem2} in which
a DP is formulated,  
the existence of a stopping time $t_\delta$ is proved, and
the convergence of the DSM with the proposed DP is justified under some weak and 
natural assumptions.

\section{Auxiliary results}

Let us consider the following equation:
\begin{equation}
\label{2eq2}
F(V_\delta)+aV_\delta-f_\delta = 0,\qquad a>0,
\end{equation}
where $a=const$. It is known (see, e.g., \cite{R499}, \cite{Z}) that equation \eqref{2eq2} with monotone continuous operator $F$ has a unique solution for any $f_\delta\in H$. 

Let us recall the following result from \cite{R499}:
\begin{lemma}
\label{rejectedlem}
Assume that equation \eqref{aeq1} is solvable, $y$ is its minimal-norm 
solution, and $F$ is monotone continuous. Then
$$
\lim_{a\to 0} \|V_{a}-y\| = 0,
$$
where $V_a$ solves \eqref{2eq2} with $\delta=0$.
\end{lemma}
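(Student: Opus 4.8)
The plan is to establish a uniform a priori bound on $V_a$, extract a weak limit along an arbitrary sequence $a_n\to0$, identify that limit as the minimal-norm solution $y$ by exploiting monotonicity, and finally upgrade weak convergence to strong convergence. The whole family then converges because every subsequential limit equals the same element $y$.

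First I would derive the key bound $\|V_a\|\le\|y\|$. Writing \eqref{2eq2} with $\delta=0$ as $F(V_a)-F(y)=-aV_a$ and pairing with $V_a-y$, the monotonicity assumption \eqref{ceq2} gives $\langle F(V_a)-F(y),V_a-y\rangle\ge0$, whence $\langle V_a,V_a-y\rangle\le0$. This yields $\|V_a\|^2\le\langle V_a,y\rangle\le\|V_a\|\,\|y\|$, so $\|V_a\|\le\|y\|$ for every $a>0$. In particular $\|aV_a\|\le a\|y\|\to0$ as $a\to0$.

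Next, fix any sequence $a_n\to0$. By the bound just obtained, $\{V_{a_n}\}$ is bounded in $H$, so after passing to a subsequence we may assume $V_{a_n}\rightharpoonup w$ for some $w\in H$ with $\|w\|\le\liminf_n\|V_{a_n}\|\le\|y\|$, while $F(V_{a_n})=f-a_nV_{a_n}\to f$ strongly. The main step is to show that $F(w)=f$. For this I would use Minty's trick: for every $z\in H$ monotonicity gives $\langle F(z)-F(V_{a_n}),z-V_{a_n}\rangle\ge0$, and letting $n\to\infty$, combining the strong convergence $F(V_{a_n})\to f$ with the weak convergence $z-V_{a_n}\rightharpoonup z-w$, we obtain $\langle F(z)-f,z-w\rangle\ge0$ for all $z\in H$. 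Since $F$ is monotone and continuous it is maximal monotone, so this family of inequalities forces $(w,f)$ into the graph of $F$; that is, $F(w)=f$ and $w\in\mathcal{N}$. I expect this passage to the limit to be the principal obstacle: one must handle the cross term $\langle F(V_{a_n}),z-V_{a_n}\rangle$ carefully (it converges to $\langle f,z-w\rangle$ precisely because one factor converges strongly and the other weakly), and invoke maximal monotonicity of everywhere-defined monotone continuous operators.

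Finally, since $w\in\mathcal{N}$ and $y$ is the minimal-norm solution, $\|y\|\le\|w\|$; together with $\|w\|\le\|y\|$ this gives $\|w\|=\|y\|$, and uniqueness of the minimal-norm element of the closed convex set $\mathcal{N}$ forces $w=y$. To upgrade to strong convergence, I would combine $\|V_{a_n}\|\le\|y\|=\|w\|$ with the weak lower semicontinuity $\|w\|\le\liminf_n\|V_{a_n}\|$ to conclude $\lim_n\|V_{a_n}\|=\|w\|$; in a Hilbert space, weak convergence $V_{a_n}\rightharpoonup w$ together with convergence of norms implies $V_{a_n}\to w=y$ strongly. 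As the strong subsequential limit is always the same element $y$, a standard subsequence argument gives $\lim_{a\to0}\|V_a-y\|=0$, which is the claim.
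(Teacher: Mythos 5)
Your proof is correct, and it matches the intended argument: the paper itself offers no proof of Lemma~\ref{rejectedlem}, simply recalling it from \cite{R499}, where the standard proof proceeds exactly along your lines. Namely, the a priori bound $\|V_a\|\le\|y\|$ from monotonicity, weak subsequential compactness with $F(V_{a_n})\to f$ strongly, Minty's trick (equivalently, maximal monotonicity of the everywhere-defined continuous monotone $F$) to identify the weak limit as a solution, minimality of $y$ in the closed convex set $\mathcal{N}$ combined with weak lower semicontinuity of the norm to force the limit to be $y$ with convergence of norms, and the Hilbert-space upgrade from weak-plus-norm convergence to strong convergence, concluded by the usual subsequence argument.
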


Let $a=a(t)$ be a strictly monotonically decaying continuous
positive function on $[0,\infty)$, $0<a(t)\searrow 0$, and assume $a\in C^1[0,\infty)$. 
These assumptions hold throughout the paper and often are not repeated. 
Then the solution $V_\delta$ of \eqref{2eq2} is a function of $t$, $V_\delta=V_\delta(t)$.

Below the words decreasing and increasing mean strictly decreasing and strictly increasing.

\begin{lemma}
\label{remark1}
Assume $\|F(0)-f_\delta\|>0$.
Let $0<a(t)\searrow 0$, and $F$ be monotone.
Denote 
$$
\psi(t) :=\|V_\delta(t)\|,\qquad \phi(t):=a(t)\psi(t)=\|F(V_\delta(t)) - f_\delta\|,
$$ 
where $V_\delta(t)$ solves \eqref{2eq2} with $a=a(t)$. 
Then
$\phi(t)$ is decreasing, and $\psi(t)$ is increasing.
\end{lemma}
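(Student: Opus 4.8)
The plan is to exploit the defining equation $F(V_\delta(t)) + a(t)V_\delta(t) - f_\delta = 0$ together with the monotonicity of $F$ to compare solutions at two parameter values. Fix $t_1 < t_2$, write $a_1 = a(t_1)$, $a_2 = a(t_2)$ (so $a_1 > a_2$ since $a$ is decreasing), and abbreviate $V_i = V_\delta(t_i)$. Subtracting the two defining equations gives
\begin{equation}
\label{eqsub}
F(V_1) - F(V_2) + a_1 V_1 - a_2 V_2 = 0.
\end{equation}
The standard trick is to take the inner product of this relation with $V_1 - V_2$. Using monotonicity \eqref{ceq2}, the term $\langle F(V_1) - F(V_2), V_1 - V_2\rangle$ is nonnegative, so after rearranging one obtains an inequality relating $\langle a_1 V_1 - a_2 V_2, V_1 - V_2\rangle$ to a nonpositive quantity. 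The goal is to massage this into statements about the monotonicity of $\psi(t) = \|V_\delta(t)\|$ and $\phi(t) = a(t)\|V_\delta(t)\|$.

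First I would establish that $\psi$ is increasing. Splitting $a_1 V_1 - a_2 V_2 = a_2(V_1 - V_2) + (a_1 - a_2)V_1$ and pairing with $V_1 - V_2$, monotonicity forces
$$
a_2\|V_1 - V_2\|^2 + (a_1 - a_2)\langle V_1, V_1 - V_2\rangle \le 0.
$$
Since $a_2 > 0$ and $a_1 - a_2 > 0$, this yields $\langle V_1, V_1 - V_2\rangle \le 0$, i.e. $\|V_1\|^2 \le \langle V_1, V_2\rangle \le \|V_1\|\|V_2\|$, hence $\|V_1\| \le \|V_2\|$, which is exactly $\psi(t_1) \le \psi(t_2)$. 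To get strict monotonicity I would argue that equality throughout would force $V_1 = V_2$, and then \eqref{eqsub} would give $(a_1 - a_2)V_1 = 0$, so $V_1 = 0$; but the assumption $\|F(0) - f_\delta\| > 0$ rules out $V_\delta(t) = 0$ being a solution of \eqref{2eq2}, contradicting the case $V_1 = 0$. So $\psi$ is strictly increasing.

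Next I would treat $\phi(t) = a(t)\psi(t) = \|F(V_\delta(t)) - f_\delta\|$; the second equality is immediate from the defining equation. Here I would instead split $a_1 V_1 - a_2 V_2 = a_1(V_1 - V_2) + (a_1 - a_2)V_2$ and pair with $V_1 - V_2$, giving
$$
a_1\|V_1 - V_2\|^2 + (a_1 - a_2)\langle V_2, V_1 - V_2\rangle \le 0,
$$
so $\langle V_2, V_1 - V_2\rangle \le 0$, i.e. $\langle V_1, V_2\rangle \le \|V_2\|^2$. Combined with the Cauchy–Schwarz estimate and the already-established $\|V_1\| \le \|V_2\|$, I expect to extract $a_1\|V_1\| \ge a_2\|V_2\|$, which is $\phi(t_1) \ge \phi(t_2)$; strictness again follows from the nondegeneracy argument above. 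The main obstacle is organizing these inner-product manipulations so the two desired monotonicities fall out cleanly from the single relation \eqref{eqsub}, and in particular obtaining the decay of $\phi$ rather than merely of $\psi$: the inequality $\langle V_1, V_2\rangle \le \|V_2\|^2$ by itself does not directly bound $a_1\|V_1\|$, so I anticipate needing to combine both inner-product inequalities (the one giving $\|V_1\|\le\|V_2\|$ and the one above) and use the strict positivity of $a_2$ carefully to pin down the sign of $\phi(t_1) - \phi(t_2)$.
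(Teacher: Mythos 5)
Your argument for $\psi$ is correct, and in substance it is the paper's argument: pairing the difference of the two equations \eqref{2eq2} with $V_1-V_2$, using monotonicity and Cauchy--Schwarz, and handling the equality case via $V_\delta(t)\neq 0$ (which follows from $\|F(0)-f_\delta\|>0$). The genuine gap is in the $\phi$ step. The two inequalities you propose to combine, $\langle V_1,V_1-V_2\rangle\le 0$ and $\langle V_2,V_1-V_2\rangle\le 0$ (equivalently $\|V_1\|^2\le\langle V_1,V_2\rangle\le\|V_2\|^2$), cannot yield $a_1\|V_1\|\ge a_2\|V_2\|$: the second one is already a consequence of Cauchy--Schwarz and the established $\|V_1\|\le\|V_2\|$, so it adds nothing, and neither inequality even mentions $a_1,a_2$. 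Concretely, take $V_1=e$, $V_2=10e$ with $\|e\|=1$, $a_1=2$, $a_2=1$: all the relations you retain hold ($1\le 10\le 100$, $\|V_1\|\le\|V_2\|$, $a_1>a_2>0$), yet $a_1\|V_1\|=2<10=a_2\|V_2\|$. Of course no monotone $F$ produces this configuration, since here $\langle a_1V_1-a_2V_2,V_1-V_2\rangle=72>0$; but that is precisely the information you discarded when you dropped the nonnegative terms $a_2\|V_1-V_2\|^2$ and $a_1\|V_1-V_2\|^2$ from your two splits. The decay of $\phi$ lives in those quadratic terms, not in the sign conditions you kept.

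The repair is to keep the full quadratic inequality. The paper's route: from monotonicity, $(a_1+a_2)\langle V_1,V_2\rangle\ge a_1\|V_1\|^2+a_2\|V_2\|^2$ (its \eqref{1eq3}); applying Cauchy--Schwarz to the left side and factoring gives
$$
0\le (a_1+a_2)\psi_1\psi_2-a_1\psi_1^2-a_2\psi_2^2=(\phi(t_1)-\phi(t_2))(\psi(t_2)-\psi(t_1)),
$$
the paper's \eqref{2eq6}, so $\psi(t_2)>\psi(t_1)$ forces $\phi(t_1)\ge\phi(t_2)$. Alternatively, staying entirely within your own setup: add your two displayed inequalities \emph{before} discarding anything to get $(a_1+a_2)\|V_1-V_2\|^2\le(a_1-a_2)(\|V_2\|^2-\|V_1\|^2)$, then use $\|V_1-V_2\|\ge\psi_2-\psi_1>0$ and divide by $\psi_2-\psi_1$ to obtain $(a_1+a_2)(\psi_2-\psi_1)\le(a_1-a_2)(\psi_2+\psi_1)$, which rearranges to $a_2\psi_2\le a_1\psi_1$. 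One further point: your claim that strictness of the decrease of $\phi$ ``again follows from the nondegeneracy argument'' is not provable, and in fact strict decay of $\phi$ can fail: for $F=0$ and $f_\delta\neq 0$ one has $V_\delta(t)=f_\delta/a(t)$, so $\phi(t)\equiv\|f_\delta\|$ is constant while the hypothesis $\|F(0)-f_\delta\|>0$ holds. The paper's own proof likewise establishes only $\phi(t_1)\ge\phi(t_2)$, and that non-strict decay is all that is used later (e.g., in Lemma~\ref{lemma2.8}).
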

 
\begin{proof}
Since $\|F(0)-f_\delta \|>0$, one has $\psi(t)\not=0,\, \forall t\ge 0$. 
Indeed, if $\psi(t)\big{|}_{t=\tau}=0$, then $V_\delta(\tau)=0$, and equation \eqref{2eq2} implies
$\|F(0)-f_\delta\|=0$, which is a contradiction. 
Note that $\phi(t)=a(t)\|V_\delta(t)\|$. One has
\begin{equation}
\label{1eq3}
\begin{split}
0&\le \langle F(V_\delta(t_1))-F(V_\delta(t_2)),V_\delta(t_1)-V_\delta(t_2)\rangle\\
&= \langle -a(t_1)V_\delta(t_1)+a(t_2)V_\delta(t_2),V_\delta(t_1)-V_\delta(t_2)\rangle\\
&= (a(t_1)+a(t_2))\langle V_\delta(t_1),V_\delta(t_2) \rangle -a(t_1)\|V_\delta(t_1)\|^2 - a(t_2)\|V_\delta(t_2)\|^2.
\end{split}
\end{equation}
Thus,
\begin{equation}
\label{2eq6}
\begin{split}
0&\le (a(t_1)+a(t_2))\langle V_\delta(t_1),V_\delta(t_2) \rangle -a(t_1)\|V_\delta(t_1)\|^2 - a(t_2)\|V_\delta(t_2)\|^2\\
& \le  (a(t_1)+a(t_2))\|V_\delta(t_1)\|\|V_\delta(t_2) \| - a(t_1)\|V_\delta(t_1)\|^2 - a(t_2)\|V_\delta(t_2)\|^2\\
& = (a(t_1) \|V_\delta(t_1)\| - a(t_2) \|V_\delta(t_2)\|)(\|V_\delta(t_2)\|-\|V_\delta(t_1)\|)\\
& = (\phi(t_1)-\phi(t_2))(\psi(t_2) - \psi(t_1)).
\end{split}
\end{equation}

If $\psi(t_2) > \psi(t_1)$, then \eqref{2eq6} implies $\phi(t_1)\ge \phi(t_2)$, so
$$
a(t_1)\psi(t_1)\ge a(t_2)\psi(t_2)> a(t_2)\psi(t_1).
$$
Thus, if $\psi(t_2)> \psi(t_1)$, then $a(t_2)< a(t_1)$ and, therefore, $t_2> t_1$,
because $a(t)$ is strictly decreasing. 

Similarly, if $\psi(t_2)< \psi(t_1)$, then $\phi(t_1)\le \phi(t_2)$. This implies $a(t_2)> a(t_1)$, so $t_2< t_1$.

Suppose $\psi(t_1)=\psi(t_2)$, i.e., $\|V_\delta(t_1)\|=\|V_\delta(t_2)\|$. From \eqref{1eq3}, one has
$$
\|V_\delta(t_1)\|^2\le \langle V_\delta(t_1),V_\delta(t_2) \rangle \le \|V_\delta(t_1)\|\|V_\delta(t_2)\| = \|V_\delta(t_1)\|^2.
$$
This implies $V_\delta(t_1)=V_\delta(t_2)$, and then equation \eqref{2eq2} implies $a(t_1)=a(t_2)$. 
Hence, $t_1=t_2$, because $a(t)$ is strictly decreasing.

Therefore, $\phi(t)$ is decreasing
and $\psi(t)$ is increasing.
\end{proof}

\begin{lemma}
\label{lem1}
Let $F$ be a monotone continuous operator. 
Then, 
\begin{equation}
\label{eqhnnxsa}
\lim_{t\to\infty}\|F(V_\delta(t))-f_\delta\|\le \delta.
\end{equation}
\end{lemma}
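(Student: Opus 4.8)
The plan is to exploit the algebraic identity built into \eqref{2eq2}, namely that $F(V_\delta(t))-f_\delta=-a(t)V_\delta(t)$, so that the quantity to be estimated is exactly $\phi(t)=a(t)\|V_\delta(t)\|$ in the notation of Lemma~\ref{remark1}. First I would dispose of the trivial case $\|F(0)-f_\delta\|=0$: here $V_\delta(t)\equiv 0$ solves \eqref{2eq2}, so $\|F(V_\delta(t))-f_\delta\|\equiv 0\le\delta$ and there is nothing to prove. Assuming $\|F(0)-f_\delta\|>0$, Lemma~\ref{remark1} tells us that $\phi(t)$ is decreasing and bounded below by $0$, hence $\lim_{t\to\infty}\phi(t)$ exists; it then remains only to bound this limit by $\delta$.

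The heart of the argument is a single application of monotonicity \eqref{ceq2} to the pair $V_\delta(t)$ and the minimal-norm solution $y$. Substituting $F(V_\delta(t))=f_\delta-a(t)V_\delta(t)$ and $F(y)=f$ into $\langle F(V_\delta(t))-F(y),V_\delta(t)-y\rangle\ge 0$ gives
\begin{equation*}
a(t)\langle V_\delta(t),V_\delta(t)-y\rangle\le\langle f_\delta-f,V_\delta(t)-y\rangle\le\delta\|V_\delta(t)-y\|.
\end{equation*}
Writing $\langle V_\delta(t),V_\delta(t)-y\rangle=\|V_\delta(t)-y\|^2+\langle y,V_\delta(t)-y\rangle$ and using the Cauchy--Schwarz inequality on the last term, I would cancel one factor of $\|V_\delta(t)-y\|$ (the case $V_\delta(t)=y$ being trivial) to arrive at
\begin{equation*}
a(t)\|V_\delta(t)-y\|\le\delta+a(t)\|y\|.
\end{equation*}

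From here the conclusion follows by the triangle inequality: $a(t)\|V_\delta(t)\|\le a(t)\|V_\delta(t)-y\|+a(t)\|y\|\le\delta+2a(t)\|y\|$, that is $\phi(t)\le\delta+2a(t)\|y\|$. Letting $t\to\infty$ and using the standing hypothesis $0<a(t)\searrow 0$ yields $\lim_{t\to\infty}\phi(t)\le\delta$, which is precisely \eqref{eqhnnxsa}. The only point requiring any care --- and the closest thing to an obstacle --- is the control of the indefinite cross term $a(t)\langle y,V_\delta(t)-y\rangle$: it is exactly this term that produces the harmless extra summand $a(t)\|y\|$, which vanishes in the limit because $a(t)\to 0$, leaving the clean bound $\delta$. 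I emphasize that no smoothness of $F$ beyond continuity is used, since only monotonicity and the defining equation \eqref{2eq2} enter the estimate.
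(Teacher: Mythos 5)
Your proof is correct, and it takes a genuinely shorter route than the paper's. The decisive simplification is your opening move: since $V_\delta$ solves \eqref{2eq2}, the discrepancy is \emph{identically} $a(t)\|V_\delta(t)\|$ (this is the identity $\phi(t)=a(t)\psi(t)$ recorded in Lemma~\ref{remark1}), so everything reduces to bounding $\|V_\delta(t)\|$. Your monotonicity estimate $a\|V_\delta-y\|\le\delta+a\|y\|$ is exactly the paper's inequality \eqref{1eq2}, derived by the same pairing with $V_\delta-y$; alternatively you could simply quote \eqref{2eq1} from Remark~\ref{rem3}, which saves the factor of $2$ and gives $\phi(t)\le a(t)\|y\|+\delta$ directly. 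The paper, by contrast, does not substitute the identity: it pairs \eqref{2eq2} a second time, with $F(V_\delta)-f_\delta$, obtaining the quadratic inequality \eqref{1eq1} in which the discrepancy appears on both sides, and then needs the Young-type inequality $ab\le\epsilon a^2+\frac{b^2}{4\epsilon}$ and the limit $\epsilon\to0$ in \eqref{3eq4} to disentangle it. Your substitution makes that second pairing and the $\epsilon$-argument superfluous, and yields a clean pointwise bound $\|F(V_\delta(t))-f_\delta\|\le\delta+2a(t)\|y\|$ valid for every $t$, not merely a limiting statement. You are also more careful than the paper on two minor points: the degenerate case $\|F(0)-f_\delta\|=0$, where uniqueness of the solution of \eqref{2eq2} forces $V_\delta\equiv0$, and the existence of the limit in \eqref{eqhnnxsa}, which you secure via the monotonicity of $\phi$ from Lemma~\ref{remark1} (the paper's computation, as written, only bounds a $\limsup$). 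Nothing is lost by your route: it uses only monotonicity, the defining equation, and $a(t)\searrow0$, exactly the hypotheses the paper uses, and the quantitative information it produces is at least as sharp.
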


\begin{proof}
We have $F(y)=f$, and
\begin{align*}
0=&\langle F(V_\delta)+aV_\delta-f_\delta, F(V_\delta)-f_\delta \rangle\\
=&\|F(V_\delta)-f_\delta\|^2+a\langle V_\delta-y, F(V_\delta)-f_\delta \rangle + a\langle y, F(V_\delta)-f_\delta \rangle\\
=&\|F(V_\delta)-f_\delta\|^2+a\langle V_\delta-y, F(V_\delta)-F(y) \rangle + a\langle V_\delta-y, f-f_\delta \rangle \\
&+ a\langle y, F(V_\delta)-f_\delta \rangle\\
\ge&\|F(V_\delta)-f_\delta\|^2 + a\langle V_\delta-y, f-f_\delta \rangle + a\langle y, F(V_\delta)-f_\delta \rangle.
\end{align*}
Here the inequality $\langle V_\delta-y, F(V_\delta)-F(y) \rangle\ge0$ was used. 
Therefore,
\begin{equation}
\label{1eq1}
\begin{split}
\|F(V_\delta)-f_\delta\|^2 &\le -a\langle V_\delta-y, f-f_\delta \rangle - a\langle y, F(V_\delta)-f_\delta \rangle\\
&\le a\|V_\delta-y\| \|f-f_\delta\| + a\|y\| \|F(V_\delta)-f_\delta\|\\
&\le  a\delta \|V_\delta-y\|  + a\|y\| \|F(V_\delta)-f_\delta\|.
\end{split}
\end{equation}
On the other hand, we have
\begin{align*}
0&= \langle F(V_\delta)-F(y) + aV_\delta +f -f_\delta, V_\delta-y\rangle\\
&=\langle F(V_\delta)-F(y),V_\delta-y\rangle + a\| V_\delta-y\| ^2 + 
a\langle y, V_\delta-y\rangle + \langle f-f_\delta, V_\delta-y\rangle\\
&\ge  a\| V_\delta-y\| ^2 + a\langle y, V_\delta-y\rangle + \langle 
f-f_\delta, V_\delta-y\rangle,
\end{align*}
where the inequality 
$\langle V_\delta-y, F(V_\delta)-F(y) \rangle\ge0$ was used. Therefore,
$$
a\|V_\delta-y\|^2 \le a\|y\|\|V_\delta-y\|+\delta\|V_\delta-y\|.
$$
This implies
\begin{equation}
\label{1eq2}
a\|V_\delta-y\|\le a\|y\|+\delta.
\end{equation}
From \eqref{1eq1} and \eqref{1eq2}, and an elementary inequality $ab\le \epsilon a^2+\frac{b^2}{4\epsilon},\,\forall\epsilon>0$, one gets:
\begin{equation}
\label{3eq4}
\begin{split}
\|F(V_\delta)-f_\delta\|^2&\le \delta^2 + a\|y\|\delta + a\|y\| \|F(V_\delta)-f_\delta\|\\
&\le \delta^2 + a\|y\|\delta + \epsilon \|F(V_\delta)-f_\delta\|^2 + 
\frac{1}{4\epsilon}a^2\|y\|^2,
\end{split}
\end{equation}
where $\epsilon>0$ is fixed, independent of $t$, and can be chosen 
arbitrary small. 
Let $t\to\infty$ and $a=a(t)\searrow 0$. Then \eqref{3eq4} implies
$$
\lim_{t\to\infty}(1-\epsilon)\|F(V_\delta(t))-f_\delta\|^2\le 
\delta^2,
$$
for any fixed $\epsilon>0$ arbitrarily small. 
This implies \eqref{eqhnnxsa}. Lemma~\ref{lem1} is proved.
\end{proof}

\begin{remark}
\label{rem3}
{\rm 
Let $V:=V_\delta(t)|_{\delta=0}$, so 
$$
F(V)+a(t)V-f=0.
$$ 
Let $y$ be the minimal-norm solution to equation \eqref{aeq1}. 
We claim that
\begin{equation}
\label{rejected11}
\|V_{\delta}-V\|\le \frac{\delta}{a}.
\end{equation}
Indeed, from \eqref{2eq2} one gets
$$
F(V_{\delta}) - F(V) + a (V_{\delta}-V)=f_\delta - f.
$$
Multiply this equality with $(V_{\delta}-V)$ and use the monotonicity of $F$ to get
\begin{align*}
a \|V_{\delta}-V\|^2\le \delta \|V_{\delta}-V\|.
\end{align*}
This implies \eqref{rejected11}. 

Similarly, multiplying the equation
$$
F(V) + a V -F(y)=0,
$$
by $V-y$ one derives the inequality:
\begin{equation}
\label{rejected12}
\|V\| \le \|y\|.
\end{equation}
Similar arguments one can find in \cite{R499}. 

From \eqref{rejected11} and \eqref{rejected12}, one gets the following estimate:
\begin{equation}
\label{2eq1}
\|V_\delta\|\le \|V\|+\frac{\delta}{a}\le \|y\|+\frac{\delta}{a}.
\end{equation}

From the monotonicity of $F$ and \eqref{2eq2} one gets
\begin{equation}
\label{eq>1}
\begin{split}
0 &\le \langle F(V_\delta(t)) - F(V_\delta(t')), V_\delta(t) - V_\delta(t') \rangle \\
&\le \langle a(t')V_\delta(t') - a(t)V_\delta(t), V_\delta(t) - V_\delta(t') \rangle \\
& = - a(t')\|V_\delta(t) - V_\delta(t')\|^2 + 
(a(t') - a(t)) \langle V_\delta(t), V_\delta(t) - V_\delta(t') \rangle\\
& \le - a(t')\|V_\delta(t) - V_\delta(t')\|^2 + 
|a(t') - a(t)| \|V_\delta(t)\| \|V_\delta(t) - V_\delta(t')\|,
\end{split}
\end{equation}
for all $t,t'>0$. 
This implies:
\begin{equation}
\label{eq>2}
\limsup_{\xi\to 0} \frac{\|V_\delta(t+\xi) - V_\delta(t)\|}{|\xi|} 
\le \frac{|\dot{a}(t)|}{a(t)} \|V_\delta(t)\|,\qquad t>0.
\end{equation}
}
\end{remark}

Let us formulate and prove a version of the Gronwall's inequality for 
continuous functions.

\begin{lemma}
\label{lemma2.7}
Let $\alpha(t)$ and $\beta(t)$ be continuous nonnegative functions on $[0,\infty)$.  
Let $0\le g(t)$ be a continuous function on $[0,\infty)$ satisfying the following condition:
\begin{equation}
\label{eqw1}
\limsup_{\xi\to 0} \frac{g^2(t + \xi) -g^2(t)}{\xi} 
\le -2\alpha(t)g^2(t) + 2\beta(t)g(t),\qquad \forall t\ge 0.
\end{equation}
Then
\begin{equation}
\label{eqw5}
g(t) \le g(0)e^{-\tilde{\varphi}(t)} + e^{-\tilde{\varphi}(t)}\int_0^t e^{\tilde{\varphi}(s)}\beta(s)ds
,\qquad\tilde{\varphi}(t) := \int_0^t \alpha(s)ds.
\end{equation}
\end{lemma}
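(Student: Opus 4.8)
The plan is to restate the conclusion \eqref{eqw5} in the equivalent form $e^{\tilde\varphi(t)}g(t)\le g(0)+\int_0^t e^{\tilde\varphi(s)}\beta(s)\,ds$ and to deduce it from a differential-inequality-to-integral-inequality principle for the upper right Dini derivative $D^+\psi(t):=\limsup_{\xi\to0^+}\frac{\psi(t+\xi)-\psi(t)}{\xi}$. The principle I will use is: if $\psi$ is continuous on $[0,\infty)$ and $D^+\psi(t)\le m(t)$ for all $t\ge0$ with $m$ continuous, then $\psi(t)\le\psi(0)+\int_0^t m(s)\,ds$. This follows by applying to $\Psi(t):=\psi(t)-\int_0^t m(s)\,ds$ the classical fact that a continuous function with nonpositive upper right derivative is nonincreasing; indeed $D^+\Psi=D^+\psi-m\le0$, since subtracting the $C^1$ antiderivative of $m$ only shifts the Dini derivative by $-m(t)$. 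Note also that the two-sided $\limsup$ in \eqref{eqw1} dominates the one-sided one, so the hypothesis already gives $D^+(g^2)(t)\le -2\alpha(t)g^2(t)+2\beta(t)g(t)$.

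Formally, at any point where $g(t)>0$ the identity $g^2(t+\xi)-g^2(t)=(g(t+\xi)-g(t))(g(t+\xi)+g(t))$ together with continuity of $g$ gives $D^+g(t)=\frac{1}{2g(t)}D^+(g^2)(t)\le-\alpha(t)g(t)+\beta(t)$, and then $h(t):=e^{\tilde\varphi(t)}g(t)$ would satisfy $D^+h\le e^{\tilde\varphi}\beta$, which is exactly what is needed. The obstacle, and the main difficulty of the proof, is that $g$ is only assumed continuous, so it may vanish, and $\sqrt{\,\cdot\,}$ is not Lipschitz at $0$; at a zero of $g$ the quantity $D^+g$ can be $+\infty$, so one cannot legitimately divide by $g(t)$ there.

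To circumvent this I will regularize. Fix $\epsilon>0$ and set $\phi_\epsilon:=\sqrt{g^2+\epsilon^2}$, which is continuous and bounded below by $\epsilon$, so division is always legitimate. Since $\phi_\epsilon^2=g^2+\epsilon^2$, one has $D^+(\phi_\epsilon^2)=D^+(g^2)\le -2\alpha g^2+2\beta g=-2\alpha\phi_\epsilon^2+2\alpha\epsilon^2+2\beta g$, whence, using $\phi_\epsilon\ge\epsilon$ and $g\le\phi_\epsilon$,
\[
D^+\phi_\epsilon=\frac{1}{2\phi_\epsilon}D^+(\phi_\epsilon^2)\le -\alpha\phi_\epsilon+\frac{\alpha\epsilon^2}{\phi_\epsilon}+\frac{\beta g}{\phi_\epsilon}\le -\alpha\phi_\epsilon+\beta+\alpha\epsilon.
\]
Setting $h_\epsilon:=e^{\tilde\varphi}\phi_\epsilon$ and using the $C^1$ product rule for $D^+$ gives $D^+h_\epsilon=e^{\tilde\varphi}D^+\phi_\epsilon+\alpha e^{\tilde\varphi}\phi_\epsilon\le e^{\tilde\varphi}(\beta+\alpha\epsilon)$ for all $t$. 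Applying the integration principle (with $m=e^{\tilde\varphi}(\beta+\alpha\epsilon)$ continuous) and $\tilde\varphi(0)=0$ yields
\[
e^{\tilde\varphi(t)}\phi_\epsilon(t)\le \phi_\epsilon(0)+\int_0^t e^{\tilde\varphi(s)}\beta(s)\,ds+\epsilon\int_0^t e^{\tilde\varphi(s)}\alpha(s)\,ds.
\]

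Finally I let $\epsilon\to0^+$. Then $\phi_\epsilon(t)\to g(t)$ and $\phi_\epsilon(0)\to g(0)$, while $\int_0^t e^{\tilde\varphi(s)}\alpha(s)\,ds=e^{\tilde\varphi(t)}-1$ is finite, so the error term $\epsilon\,(e^{\tilde\varphi(t)}-1)\to0$. This produces $e^{\tilde\varphi(t)}g(t)\le g(0)+\int_0^t e^{\tilde\varphi(s)}\beta(s)\,ds$, which is \eqref{eqw5}. The only auxiliary facts needed, beyond the zero-set issue resolved by the regularization, are the elementary chain and product rules for the upper Dini derivative used above, namely $D^+\phi_\epsilon=\frac{1}{2\phi_\epsilon}D^+(\phi_\epsilon^2)$ and $D^+(e^{\tilde\varphi}\phi_\epsilon)=e^{\tilde\varphi}D^+\phi_\epsilon+\alpha e^{\tilde\varphi}\phi_\epsilon$; both follow because multiplying (or adding) a difference quotient by a factor converging to a positive limit (respectively, by a $C^1$ term) passes through the $\limsup$.
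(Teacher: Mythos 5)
Your proof is correct and follows essentially the same route as the paper's: regularize $g$ under the square root so that division by the regularized function is always legitimate, pass to a linear differential inequality for the Dini derivative, multiply by the integrating factor $e^{\tilde{\varphi}}$, integrate, and let $\epsilon\to 0$. The only differences are cosmetic: the paper adds $\epsilon e^{-2\int_0^t\alpha(\xi)d\xi}$ under the square root, which makes the regularized inequality exact with no error term, whereas your constant $\epsilon^2$ carries the harmless remainder $\epsilon\bigl(e^{\tilde{\varphi}(t)}-1\bigr)$ that vanishes in the limit; and you make explicit (via the classical nonincreasing-function lemma for $D^+$, after noting the two-sided $\limsup$ dominates the one-sided one) the integration step from the differential to the integral inequality that the paper performs implicitly.
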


\begin{proof}
Let 
$$
g_\epsilon(t):=\bigg{(}g^2(t)+\epsilon e^{-2\int_0^t \alpha(\xi)d\xi}\bigg{)}^\frac{1}{2},\qquad t\ge 0,\qquad\epsilon>0.$$ 
From \eqref{eqw1} one obtains
\begin{equation}
\label{eqjj1}
\begin{split}
\limsup_{\xi\to 0} \frac{g_\epsilon^2(t + \xi) -g_\epsilon^2(t)}{\xi} 
&= \limsup_{\xi\to 0} \frac{g^2(t + \xi) -g^2(t)}{\xi} + \epsilon\frac{d}{dt} e^{-2\int_0^t \alpha(\xi)d\xi}\\
&\le -2\alpha(t)g^2(t) + 2\beta(t)g(t) - 2\epsilon \alpha(t)e^{-2\int_0^t \alpha(\xi)d\xi}\\
&\le -2 \alpha(t) g_\epsilon^2(t) + 2\beta(t)g_\epsilon(t),\qquad \forall t\ge 0.
\end{split}
\end{equation}
Since $g_\epsilon(t)>0$, it follows from \eqref{eqw1} and the continuity of $g_\epsilon$ that
\begin{equation}
\label{eqw2}
\limsup_{\xi\to 0} \frac{g_\epsilon(t + \xi) -g_\epsilon(t)}{\xi} 
\le -\alpha(t)g_\epsilon(t) + \beta(t).
\end{equation}
From the Taylor expansion of $e^{\int_t^{t+\xi}a(s)ds}$, we have
$$
e^{\tilde{\varphi}(t+\xi)} = e^{\tilde{\varphi}(t)+\int_t^{t+\xi}a(s)ds} 
= e^{\tilde{\varphi}(t)}\bigg{(}1 + \int_t^{t+\xi}a(s)ds + O(\xi^2)\bigg{)},\qquad \xi\to 0.
$$
This, \eqref{eqw2}, the mean value theorem for integration, and the continuity of $g_\epsilon(t)$ imply
\begin{equation}
\label{eqw3}
\limsup_{\xi\to 0} \frac{e^{\tilde{\varphi}(t+\xi)}g_\epsilon(t + \xi) -e^{\tilde{\varphi}(t)}g_\epsilon(t)}{\xi} 
\le e^{\tilde{\varphi}(t)}\beta(t).
\end{equation}
From \eqref{eqw3} one obtains 
\begin{equation}
\label{eqw4}
e^{\tilde{\varphi}(t)}g_\epsilon(t) - e^{\tilde{\varphi}(0)}g_\epsilon(0) \le \int_0^t e^{\tilde{\varphi}(s)}\beta(s)ds,\qquad t\ge 0.
\end{equation}
This implies 
\begin{equation}
\label{eqjj3}
g(t)<g_\epsilon(t) \le \bigg{(}g^2(0) + \epsilon \bigg{)}^\frac{1}{2} e^{-\tilde{\varphi}(t)} + e^{-\tilde{\varphi}(t)} 
\int_0^t e^{\tilde{\varphi}(s)}\beta(s)ds,\qquad \forall t\ge 0.
\end{equation}
Letting $\epsilon\to 0$ in \eqref{eqjj3} one obtains \eqref{eqw5}. Lemma~\ref{lemma2.7} is proved. 
\end{proof}

\begin{lemma}
\label{lemma2.8}
Let $a(t)\in C^{1}[0,\infty)$ satisfy the following conditions (see also \eqref{eqsxa})
\begin{equation}
\label{eqpp1}
0< a(t)\searrow 0,\qquad 0<\frac{|\dot{a}(t)|}{a^2(t)} \searrow 0.
\end{equation}
Let $\phi(t) := \int_0^t a(s)ds$ and $V_\delta(t)$ be the solution to \eqref{2eq2} 
with $a=a(t)$. Then the following relations hold:
\begin{align}
\label{eqsxaz}
\lim_{t\to\infty} \phi(t) &= \infty,\\
\label{eql6}
\lim_{t\to \infty} e^{r\phi(t)}a(t) &= \infty,\qquad r=const>0,\\
\label{eql3}
\lim_{t\to\infty} \frac{\int_0^t 
e^{\phi(s)}|\dot{a}(s)|\|V_\delta(s)\|ds}{e^{\phi(t)}}&=0,\\
\label{eql9}
M:=\lim_{t\to\infty} \frac{\int_0^{t} e^{\phi(s)}|\dot{a}(s)|ds}{e^{\phi(t)}a(t)} &= 0.
\end{align}
\end{lemma}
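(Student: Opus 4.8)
The plan is to establish the four limits in the order listed, since each one feeds into the next, and to rely throughout on just two elementary tools: a lower bound on $a(t)$ extracted from \eqref{eqpp1}, and L'Hôpital's rule in the $\infty/\infty$ form for the three quotient limits. The unifying observation is that $\frac{d}{dt}\big(1/a(t)\big)=|\dot a(t)|/a^2(t)$, so the two parts of \eqref{eqpp1} translate directly into quantitative statements about the growth of $1/a$ and the decay of $|\dot a|/a^2$.

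First, for \eqref{eqsxaz}, I would use only the boundedness of $|\dot a|/a^2$. Since $\frac{d}{dt}\big(1/a(t)\big)=|\dot a(t)|/a^2(t)\le c_0:=|\dot a(0)|/a^2(0)$, integration gives $1/a(t)\le 1/a(0)+c_0 t$, hence $a(t)\ge a(0)/(1+c_0 a(0)t)$. Integrating this lower bound yields $\phi(t)\ge \frac{1}{c_0}\ln\!\big(1+c_0 a(0)t\big)\to\infty$. Next, for \eqref{eql6}, I would differentiate the logarithm $r\phi(t)+\ln a(t)$, whose derivative is $ra(t)-|\dot a(t)|/a(t)=\big(r-|\dot a(t)|/a^2(t)\big)a(t)$. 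This is the one place where the full decay $|\dot a|/a^2\searrow 0$ (not mere boundedness) is needed: given $r>0$, pick $T$ with $|\dot a(t)|/a^2(t)\le r/2$ for $t\ge T$, so the derivative is at least $\tfrac{r}{2}a(t)>0$; integrating gives $r\phi(t)+\ln a(t)\ge \tfrac{r}{2}\phi(t)+\text{const}\to\infty$ by \eqref{eqsxaz}, and exponentiating yields \eqref{eql6}.

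For \eqref{eql9} I would apply L'Hôpital's rule, first invoking \eqref{eql6} with $r=1$ to know the denominator $e^{\phi(t)}a(t)\to\infty$. Since $\frac{d}{dt}\big(e^{\phi}a\big)=e^{\phi}(a^2+\dot a)=e^{\phi}(a^2-|\dot a|)$, the ratio of derivatives collapses to $\frac{|\dot a|/a^2}{1-|\dot a|/a^2}\to 0$; here I should first check, via \eqref{eqpp1}, that $a^2-|\dot a|>0$ for large $t$, so that the denominator's derivative is eventually positive and nonvanishing, as L'Hôpital requires.

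The only genuinely new feature appears in \eqref{eql3}, where the weight $\|V_\delta(s)\|$ is \emph{not} bounded but grows like $\delta/a(s)$; this is the main obstacle. I would control it with the a priori bound $\|V_\delta\|\le \|y\|+\delta/a$ from \eqref{2eq1} and split the integral into a $\|y\|$-part and a $\delta$-part. Dividing the $\|y\|$-part by $e^{\phi(t)}$ gives $\|y\|\,a(t)$ times the quotient of \eqref{eql9}, which tends to $0$ since both factors do; the $\delta$-part equals $\delta\int_0^t e^{\phi}|\dot a|/a\,ds\big/e^{\phi(t)}$, to which L'Hôpital applies once more, its derivative ratio being precisely $|\dot a|/a^2\to 0$ (and the denominator $e^{\phi(t)}\to\infty$ by \eqref{eqsxaz}). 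Thus all the deeper work reduces to the single decay hypothesis in \eqref{eqpp1}, with the unbounded factor in \eqref{eql3} tamed by \eqref{2eq1}; the remaining care is purely in verifying the hypotheses of L'Hôpital at each application.
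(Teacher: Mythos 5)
Your argument is correct, and in two of the four limits it genuinely departs from the paper's proof. For \eqref{eqsxaz} the paper observes that $a(t)\ge |\dot{a}(t)|/a(t)$ for $t\ge t_1$ and integrates to get $\phi(t)\ge \ln a(t_1)-\ln a(t)\to\infty$; your integration of $\frac{d}{dt}(1/a)=|\dot{a}|/a^2\le c_0$ gives the explicit bound $a(t)\ge a(0)/(1+c_0a(0)t)$, which is more quantitative but uses only boundedness, exactly as you note. For \eqref{eql6} the paper applies L'Hospital to $\phi(t)/\ln(1/a^2(t))$ to get $\phi(t)>\frac{1}{r}\ln(1/a^2(t))$ for large $t$, hence $ae^{r\phi}\ge 1/a\to\infty$; your differential inequality for $r\phi+\ln a$ reaches the same conclusion without L'Hospital and cleanly isolates the one place where the full decay $|\dot{a}|/a^2\searrow 0$ (rather than mere boundedness) enters. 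Your proof of \eqref{eql9} is essentially the paper's computation; the paper splits into the cases of bounded and unbounded numerator, while you use the strong form of L'Hospital (only the denominator need tend to infinity), and your check that $a^2-|\dot{a}|>0$ eventually is exactly the needed hypothesis, so both are fine. The real divergence is \eqref{eql3}: the paper proves it \emph{before} \eqref{eql9} by invoking Lemma~\ref{remark1} (monotonicity of $a(t)\|V_\delta(t)\|$) to replace $|\dot{a}(s)|\|V_\delta(s)\|$ by $\frac{|\dot{a}(s)|}{a(s)}a(0)\|V_\delta(0)\|$, reducing everything to the auxiliary limit \eqref{eql5}; you instead prove \eqref{eql9} first and tame $\|V_\delta\|$ via the a priori estimate \eqref{2eq1}, splitting the integral into a $\|y\|$-part killed by \eqref{eql9} together with $a(t)\to0$, and a $\delta/a$-part killed by the same L'Hospital computation that underlies \eqref{eql5}. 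There is no circularity in your ordering. The trade-off is mild: the paper's route leans on the hypothesis $\|F(0)-f_\delta\|>0$ of Lemma~\ref{remark1} (the degenerate case being trivial since then $V_\delta\equiv 0$), whereas yours leans on the solvability of \eqref{aeq1} through the bound $\|V_\delta\|\le\|y\|+\delta/a$ — both are standing assumptions of the paper, so either is legitimate.
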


\begin{proof}
{\it Let us first prove \eqref{eqsxaz}.} 
It follows from \eqref{eqpp1} that there exists $t_1\ge 0$ such that 
$$
a(t) \ge -\frac{\dot{a}(t)}{a(t)},\qquad \forall t\ge t_1. 
$$
This implies
\begin{equation}
\label{eqsxazz}
\phi(t)\ge \int_{t_0}^t a(s)ds \ge \int_{t_0}^t \frac{-\dot{a}(s)}{a(s)}ds = -\ln a(s)\bigg{|}_{t_0}^t=\ln a(t_0) -\ln a(t).
\end{equation}
Relation \eqref{eqsxaz} follows from the relation $\lim_{t\to\infty}a(t)=0$ and \eqref{eqsxazz}.

{\it Let us prove \eqref{eql6}.} 
We claim that, for sufficiently large $t>0$, the following inequality holds:
\begin{equation}
\label{xex5}
\phi(t) = \int_0^t a(s)ds > \frac{1}{r}\ln \frac{1}{a^2(t)}.
\end{equation}
Indeed, by L'Hospital's rule and \eqref{eqpp1}, one gets
\begin{equation}
\label{xex6}
\lim_{t\to\infty} \frac{\phi(t)}{\ln \frac{1}{a^2(t)}} 
= \lim_{t\to\infty}\frac{a^2(t)}{2|\dot{a}(t)|} = \infty.
\end{equation}
This implies that \eqref{xex5} holds for all $t\ge \tilde{T}$ provided that $\tilde{T}>0$ is sufficiently large.
It follows from inequality \eqref{xex5} that 
\begin{equation}
\label{xex7}
\lim_{t\to\infty} a(t)e^{r\phi(t)} \ge \lim_{t\to\infty} a(t) e^{\ln \frac{1}{a^2(t)}} = \lim_{t\to\infty}\frac{1}{a(t)} = \infty.
\end{equation}
Thus, equality \eqref{eql6} is proved.

{\it Let us prove \eqref{eql3}.} 
Since $a(t)\|V_\delta(t)\|$ is a decreasing function of $t$ (cf. Lemma~\ref{remark1}) one gets
\begin{equation}
\label{eql4}
\lim_{t\to\infty} \frac{\int_0^t 
e^{\phi(s)}|\dot{a}(s)|\|V_\delta(s)\|ds}{e^{\phi(t)}}\le
\lim_{t\to\infty} \frac{\int_0^t 
e^{\phi(s)}\frac{|\dot{a}(s)|}{a(s)}a(0)\|V_\delta(0)\|ds}{e^{\phi(t)}}.
\end{equation}
We claim that
\begin{equation}
\label{eql5}
\lim_{t\to\infty} \frac{\int_0^t 
e^{\phi(s)}\frac{|\dot{a}(s)|}{a(s)}ds}{e^{\phi(t)}}=0.
\end{equation}
Indeed, if $\int_0^t 
e^{\phi(s)}\frac{|\dot{a}(s)|}{a(s)}ds<\infty$, then \eqref{eql5} follows from \eqref{eqsxaz}.
Otherwise, relation \eqref{eql5} follows from L'Hospital's rule and 
the relation $\lim_{t\to\infty}\frac{|\dot{a}(t)|}{a^2(t)}=0$. 

From \eqref{eql4} and \eqref{eql5} one gets \eqref{eql3}.

{\it Let us prove \eqref{eql9}}.  
Since \eqref{eqsxaz} 
 holds and $a(t)e^{\phi(t)}\to 0$ as $t\to\infty$, by \eqref{eql6} with $r=1$,
relation \eqref{eql9} holds if the numerator of \eqref{eql9} is bounded. 
Otherwise, L'Hospital's rule yields
\begin{equation}
\label{eql14}
M=\lim_{t\to\infty} \frac{ e^{\phi(t)}|\dot{a}(t)|}
{e^{\phi(t)}a^2(t) + e^{\phi(t)}\dot{a}(t)} = 0.
\end{equation}
Here we have used the relation $\lim_{t\to\infty}\frac{|\dot{a}|(t)}{a^2(t)}=0$. 

Lemma~\ref{lemma2.8} is proved. 
\end{proof}

\begin{remark}
From \eqref{eql3} and the inequality $\|V_\delta(t)\|\ge \|V_\delta(0)\|>0,\,\forall t\ge 0$, (see Lemma~\ref{remark1}), one gets 
the following relation
\begin{equation}
\label{eqwer}
\lim_{t\to\infty} \frac{\int_0^t 
e^{\phi(s)}|\dot{a}(s)|ds}{e^{\phi(t)}}=0.
\end{equation}

Let $\epsilon>0$ be arbitrary. It follows from \eqref{eql9} that there exists $t_\epsilon>0$ 
such that the following inequality holds:
\begin{equation}
\label{eqwer2}
e^{-\phi(t)}\int_0^{t} e^{\phi(s)}|\dot{a}(s)|ds < \epsilon a(t),\qquad \forall t\ge t_\epsilon. 
\end{equation}
\end{remark}

Let us assume that $a(t)$ satisfies the following conditions:
\begin{equation}
\label{eqsxaaa}
0<a(t)\searrow 0,\qquad \frac{1}{2}>q > \frac{|\dot{a}(t)|}{a^2(t)}\searrow 0.
\end{equation}

\begin{lemma}
\label{lemauxi2}
Let $a(t)$ 
satisfy \eqref{eqsxaaa} and $\varphi(t):=(1-q)\int_0^t a(s)ds$, $q\in(0,1/2)$. 
Then one has
\begin{equation}
\label{auxieq3}
e^{-\varphi(t)}\int_0^t e^{\varphi(s)}|\dot{a}(s)|\|V_\delta(s)\|ds \le
 \frac{q}{1-2q}a(t)\|V_\delta(t)\|,\qquad t\ge 0.
\end{equation} 
\end{lemma}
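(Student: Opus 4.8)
The plan is to separate the factor $\|V_\delta(s)\|$ from the purely analytic quantity built out of $a$, and then to close a self-referential (bootstrap) inequality whose solvability hinges on $q<1/2$.

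First I would exploit the monotonicity supplied by Lemma~\ref{remark1}. The relevant fact here is that $\psi(t)=\|V_\delta(t)\|$ is \emph{increasing} (not that $a(t)\|V_\delta(t)\|$ is decreasing), so that $\|V_\delta(s)\|\le\|V_\delta(t)\|$ for all $0\le s\le t$. Bounding $\|V_\delta(s)\|$ by $\|V_\delta(t)\|$ under the integral sign and pulling this constant out reduces \eqref{auxieq3} to the scalar estimate
\[
J(t):=\int_0^t e^{\varphi(s)}|\dot a(s)|\,ds\le \frac{q}{1-2q}\,a(t)e^{\varphi(t)},\qquad t\ge 0,
\]
in which $V_\delta$ no longer appears.

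Next I would feed in the structural hypothesis \eqref{eqsxaaa} in the form $|\dot a(s)|\le q\,a^2(s)$ and use $\varphi'(s)=(1-q)a(s)$ to rewrite $a^2(s)=\frac{1}{1-q}\varphi'(s)a(s)$, giving $J(t)\le\frac{q}{1-q}\int_0^t e^{\varphi(s)}\varphi'(s)a(s)\,ds$. Since $\frac{d}{ds}e^{\varphi(s)}=\varphi'(s)e^{\varphi(s)}$, an integration by parts with $u=a(s)$ and $dv=d\big(e^{\varphi(s)}\big)$ yields
\[
\int_0^t e^{\varphi(s)}\varphi'(s)a(s)\,ds = a(t)e^{\varphi(t)}-a(0)+J(t),
\]
where $\varphi(0)=0$ and the identity $\dot a=-|\dot a|$ (valid because $a$ is decreasing) let me recognize the remaining integral as $J(t)$ itself.

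Combining the last two displays gives the bootstrap inequality $J(t)\le\frac{q}{1-q}\big(a(t)e^{\varphi(t)}-a(0)+J(t)\big)$. For each fixed $t$ the quantity $J(t)$ is finite, since $a\in C^1$ makes the integrand continuous on the compact interval $[0,t]$; hence I may legitimately transpose the $J(t)$-term. This is the one delicate point: the coefficient $1-\frac{q}{1-q}=\frac{1-2q}{1-q}$ is strictly positive \emph{exactly} when $q<1/2$, which is why the hypothesis on $q$ is indispensable. Solving for $J(t)$ and discarding the harmless negative term $-a(0)$ produces $J(t)\le\frac{q}{1-2q}a(t)e^{\varphi(t)}$; multiplying through by $e^{-\varphi(t)}\|V_\delta(t)\|$ then recovers \eqref{auxieq3}. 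No genuine analytic difficulty remains beyond this algebraic observation.
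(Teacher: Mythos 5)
Your proof is correct and is essentially the paper's argument in integrated form: the paper proves the pointwise inequality $e^{\varphi(t)}|\dot a(t)|\le \frac{q}{1-2q}\bigl(a(t)e^{\varphi(t)}\bigr)'$ directly from $|\dot a|<qa^2$ and $\dot a=-|\dot a|$, integrates it, discards the $-a(0)$ term, and then multiplies by $e^{-\varphi(t)}\|V_\delta(t)\|$ using the monotonicity of $\|V_\delta(t)\|$ from Lemma~\ref{remark1}, exactly as you do. Your integration-by-parts bootstrap is that same pointwise inequality rearranged at the integral level (substituting $qa^2=\frac{q}{1-q}\varphi' a$ and transposing the self-referential term, which is legitimate since your $J(t)$ is finite and $1-\frac{q}{1-q}=\frac{1-2q}{1-q}>0$ for $q<1/2$), so the two proofs coincide in substance.
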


\begin{proof} 

Let us prove that
\begin{equation}
\label{eqv1}
e^{\varphi(t)} |\dot{a}(t)|\le \frac{q}{1-2q}\bigg{(}a(t)e^{\varphi(t)}\bigg{)}',\qquad \forall t\ge 0.
\end{equation}
Inequality \eqref{eqv1} is equivalent to
\begin{equation}
\label{eqv2}
\bigg{(}\frac{1}{q}-2\bigg{)} e^{\varphi(t)} |\dot{a}(t)| \le \dot{a}(t)e^{\varphi(t)}
+ (1-q)a^2(t)e^{\varphi(t)}
,\qquad t\ge 0.
\end{equation}
Note that $\dot{a} = -|\dot{a}|$. 
Inequality \eqref{eqv2} holds because from \eqref{eqsxaaa} one obtains
\begin{equation}
\label{eqv3}
\bigg{(}\frac{1}{q}-2\bigg{)} |\dot{a}(t)| < - |\dot{a}(t)| + (1-q)a^2(t),\qquad t\ge 0.
\end{equation}
Thus, inequality \eqref{eqv1} holds. Integrate \eqref{eqv1} from 0 to $t$ and get
\begin{equation}
\label{eqv4}
\int_0^t e^{\varphi(s)} |\dot{a}(s)| ds \le \frac{q}{1-2q} \bigg{(}a(t)e^{\varphi(t)} - a(0)e^{0}\bigg{)}
< \frac{q}{1-2q}a(t)e^{\varphi(t)},\quad \forall t\ge 0.
\end{equation}

Multiplying \eqref{eqv4} by $e^{-\varphi(t)}\|V_\delta(t)\|$ and using the fact that $\|V_\delta(t)\|$
is increasing, one gets inequality \eqref{auxieq3}. Lemma~\ref{lemauxi2} is proved.
\end{proof}

\section{Main results}
\label{mainsec}

\subsection{Dynamical Systems Method}

Let $u_\delta(t)$ solve the following Cauchy problem:
\begin{equation}
\label{3eq12}
\dot{u}_\delta = -[F(u_\delta)+a(t)u_\delta-f_\delta],\qquad u_\delta(0)=u_0.
\end{equation}
Assume
\begin{equation}
\label{eqsxa}
0<a(t)\searrow 0,\qquad 0< \frac{|\dot{a}|(t)}{a^2(t)}\searrow 0,\qquad t\ge 0.
\end{equation}

\begin{remark}{\rm
Let $a(t)=\frac{d}{(c+t)^b}$, 
where $b\in(0,1)$,\, 
$c>0$ and $d>0$. Then this $a(t)$ satisfies \eqref{eqsxa}. 
}
\end{remark}

\begin{remark}
\label{remark3.2}
{\rm
It is known that there exists a unique local solution to problem \eqref{3eq12} 
for any initial data $u_0$ 
if $F$ is monotone continuous and $0<a(t)$ is a continuous function. 
Proofs for this are often based on Peano approximations (see, e.g., \cite[p.99]{D} and \cite[p.165]{R499}). 
When $F$ is monotone and hemicontinuous then equation \eqref{3eq12} is understood in the 
weak sense. 
When $F$ is monotone and continuous it is known that equation \eqref{3eq12} can be understood in the 
strong sense (see, e.g., \cite[p.167]{R499}). 
}
\end{remark}

The following lemma guarantees the global existence of a unique solution to \eqref{3eq12}. 

\begin{lemma}
\label{lemma9.4}
Let $F$ be monotone and continuous. Let $0<a(t)$ be a continuous function satisfying \eqref{eqsxa}. 
Then the unique solution to \eqref{3eq12} exists globally. 
\end{lemma}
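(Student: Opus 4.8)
The plan is to combine the local existence result of Remark~\ref{remark3.2} with an a priori bound on $\|u_\delta(t)\|$ valid on every finite time interval, and then invoke the standard blow-up alternative. By Remark~\ref{remark3.2}, there is a unique strong solution $u_\delta(t)$ on a maximal interval $[0,T)$; if $T<\infty$, then necessarily $\limsup_{t\to T^-}\|u_\delta(t)\|=\infty$, since the right-hand side of \eqref{3eq12} is continuous in $u$ and bounded on bounded $u$-sets uniformly for $t$ in a compact interval, so the solution could otherwise be continued past $T$. Hence it suffices to show that $\|u_\delta(t)\|$ stays bounded on $[0,T)$ whenever $T<\infty$.

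First I would derive a differential inequality for $g(t):=\|u_\delta(t)\|$. Using \eqref{3eq12} and $\tfrac12\frac{d}{dt}\|u_\delta\|^2=\langle u_\delta,\dot u_\delta\rangle$, one computes
$$
\frac{1}{2}\frac{d}{dt}\|u_\delta\|^2 = -\langle u_\delta, F(u_\delta)-F(0)\rangle - \langle u_\delta, F(0)-f_\delta\rangle - a(t)\|u_\delta\|^2 .
$$
The monotonicity of $F$ gives $\langle u_\delta-0,\,F(u_\delta)-F(0)\rangle\ge 0$, so the first term is nonpositive, while Cauchy--Schwarz bounds the second by $\|F(0)-f_\delta\|\,\|u_\delta\|$. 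Therefore
$$
\frac{d}{dt}\|u_\delta\|^2 \le -2a(t)\|u_\delta\|^2 + 2\|F(0)-f_\delta\|\,\|u_\delta\| .
$$

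Next I would apply the Gronwall-type estimate of Lemma~\ref{lemma2.7} with $g(t)=\|u_\delta(t)\|$, $\alpha(t)=a(t)$, and the constant function $\beta(t)\equiv\|F(0)-f_\delta\|$. With $\tilde{\varphi}(t)=\int_0^t a(s)ds$ this yields
$$
\|u_\delta(t)\| \le \|u_0\|e^{-\tilde{\varphi}(t)} + \|F(0)-f_\delta\|\, e^{-\tilde{\varphi}(t)}\int_0^t e^{\tilde{\varphi}(s)}ds .
$$
Since $\tilde{\varphi}$ is increasing, $e^{-\tilde{\varphi}(t)}\int_0^t e^{\tilde{\varphi}(s)}ds\le t$, so $\|u_\delta(t)\|\le \|u_0\| + \|F(0)-f_\delta\|\,t$, which is finite on every bounded interval. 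This contradicts the blow-up alternative if $T<\infty$; hence $T=\infty$ and the solution exists globally, uniqueness on $[0,\infty)$ following from the local uniqueness in Remark~\ref{remark3.2} by the usual continuation argument.

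The main obstacle is that $F$ is only assumed continuous (not Lipschitz or differentiable), so the blow-up alternative and the passage from the inequality for $\|u_\delta\|^2$ to a usable bound must be handled without appealing to derivative estimates for $F$. This is precisely why it is convenient to feed the bound into Lemma~\ref{lemma2.7}, whose hypothesis is stated directly in terms of $g^2(t+\xi)-g^2(t)$ and whose proof already absorbs the degeneracy at points where $u_\delta(t)=0$ via the regularization $g_\epsilon=(g^2+\epsilon e^{-2\int_0^t\alpha})^{1/2}$; thus no division by $\|u_\delta\|$ is needed and the only structural input is the monotonicity of $F$. Should one prefer to avoid differentiating $\|u_\delta\|^2$ directly, the same computation carried out with $\limsup_{\xi\to 0}\xi^{-1}(\,\cdot\,)$ in place of $\frac{d}{dt}$ gives the hypothesis of Lemma~\ref{lemma2.7} verbatim.
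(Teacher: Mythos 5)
Your a priori bound is correct and clean: testing \eqref{3eq12} against $u_\delta-0$, using monotonicity of $F$ at $0$, and feeding the resulting differential inequality into Lemma~\ref{lemma2.7} does give $\|u_\delta(t)\|\le\|u_0\|+t\,\|F(0)-f_\delta\|$ on the maximal interval; this is even a bit simpler than the paper's bound, which compares $u_\delta$ with the regularized solution $V_\delta$. The genuine gap is in the continuation step. You assert that the right-hand side of \eqref{3eq12} is ``bounded on bounded $u$-sets,'' and you rest the blow-up alternative on this. That is not a consequence of the hypotheses: a monotone continuous operator on an infinite-dimensional Hilbert space is locally bounded, but need not be bounded on bounded sets. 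For instance, on $\ell^2$ the diagonal operator $F(x)=\big(n\max\{0,\,x_n-(1-\tfrac1n)\}\big)_{n\ge1}$ is continuous and monotone, yet $\big\|F\big((1-\tfrac1n+\tfrac1{\sqrt n})e_n\big)\big\|=\sqrt n\to\infty$ while the arguments stay in a fixed ball. Consequently you cannot bound $\|\dot u_\delta(t)\|$ near $T$ from the bound on $\|u_\delta(t)\|$, the Cauchy property of $u_\delta(t)$ as $t\to T^-$ does not follow, and the dichotomy ``either the solution continues past $T$ or $\limsup_{t\to T^-}\|u_\delta(t)\|=\infty$'' is exactly the piece of standard ODE theory that is lost when local Lipschitz continuity of $F$ is dropped (recall also that Peano-type existence itself fails in infinite dimensions for merely continuous right-hand sides, which is why Remark~\ref{remark3.2} leans on monotonicity).

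The paper closes precisely this hole with a second monotonicity argument instead of a blow-up alternative: setting $z_h(t)=u_\delta(t+h)-u_\delta(t)$, one gets from \eqref{eq9.3} and monotonicity the inequality $\frac{d}{dt}\|z_h(t)\|\le -a(t)\|z_h(t)\|+(a(t)-a(t+h))\|u_\delta(t+h)\|$, and hence by Gronwall $\|z_h(t)\|\le\|z_h(0)\|+K_T\max_{0\le s\le T-h}(a(s)-a(s+h))\,e^{-\phi(t)}\int_0^t e^{\phi(s)}ds$ for $0<t<t+h<T$. Uniform continuity of $a$ on $[0,T]$ and continuity of $u_\delta$ at $t=0$ then show $u_\delta$ is uniformly continuous on $[0,T)$, so $\lim_{t\to T^-}u_\delta(t)$ exists, and the local existence result of Remark~\ref{remark3.2} restarts the solution at $T$, contradicting maximality. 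Note that this argument needs no bound on $F$ along the trajectory at all; the only quantitative input is a bound on $\|u_\delta\|$, which your first computation supplies. If you replace your blow-up-alternative paragraph with this $z_h$ estimate, your proof becomes complete.
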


\begin{proof}
Assume the contrary, i.e, that $u_\delta(t)$ exists on interval $[0,T)$ but does not
exist on $[T,T+d]$, where $d>0$ is arbitrary small. Let us prove that 
the following limit 
\begin{equation}
\label{eq9.1}
\lim_{t\to T} u_\delta(t) = u_\delta(T)
\end{equation} 
exists and is finite. This contradicts the definition of $T$ since 
one can consider $u_\delta(T)$ as an initial data and construct the solution $u_\delta(t)$ 
on interval $[T,T+d]$, for sufficiently small $d>0$, by using the local existence of $u_\delta(t)$. 

Let us first prove that
\begin{equation}
\label{eqoo1}
\|u_\delta(t) - V_\delta(t)\| \le e^{-\phi(t)}\|w(0)\| + 
e^{-\phi(t)}\int_0^t e^{\phi(s)}\frac{|\dot{a}(s)|}{a(s)}\|V_\delta(s)\|ds, 
\end{equation}
for all $t\in [0,T)$ where
\begin{equation}
\label{eqoo2}
\phi(t):=\int_0^t a(s)ds,\qquad w(t) := u_\delta(t) - V_\delta(t).
\end{equation}

From \eqref{3eq12}, \eqref{2eq2}, and the monotonicity 
of $F$ one gets 
\begin{equation}
\label{eq>3}
\begin{split}
\langle \dot{u}_\delta(t), w(t) \rangle &= 
- \langle F(u_\delta(t)) + a(t)u_\delta(t) - F(V_\delta(t)) - a(t)V_\delta(t),w(t)\rangle \\
&\le - a(t) \|w(t)\|^2.
\end{split}
\end{equation}
It follows from \eqref{eq>2} and \eqref{eq>3} that
\begin{equation}
\label{eqoo3}
\begin{split}
\limsup_{\xi\to 0}&\frac{\|w(t+\xi )\|^2 - \|w(t)\|^2}{\xi } 
= \limsup_{\xi\to 0}\frac{\langle w(t+\xi ) - w(t), 
 w(t+\xi ) + w(t)\rangle }{\xi }\\
 \le &  2\langle \dot{u}_\delta(t), w(t)\rangle + 
\limsup_{\xi\to 0}\frac{\langle V_\delta(t+\xi ) - V_\delta(t), 
 w(t+\xi ) + w(t)\rangle }{\xi }\\
 \le & -2 a(t)\|w(t)\|^2 + 2\|w(t)\| \frac{|\dot{a}(t)|}{a(t)}\|V_\delta(t)\|.
\end{split}
\end{equation}
This and Lemma~\ref{lemma2.7} imply \eqref{eqoo1}. 

Let 
\begin{equation}
\label{eqoo6}
K = 1+\sup_{t\ge 0} e^{-\phi(t)} \int_0^t e^{\phi(s)} \frac{|\dot{a}(s)|}{a(s)}ds. 
\end{equation}
It follows from \eqref{eql5} that $K$ is bounded.
From \eqref{eqoo1}, \eqref{eqoo6}, and the fact that the function $\|V_\delta(t)\|$ is increasing, one obtains
\begin{equation}
\label{eq9.2}
\begin{split}
\|u_\delta(t)\| &\le e^{-\phi(t)}\|w(0)\| + K \|V_\delta(t)\|\\
&\le K_T:= \|w(0)\| + K \|V_\delta(T)\|,\qquad \forall t\in [0,T).
\end{split}
\end{equation}

Let $z_h(t):=u_\delta(t+h) - u_\delta(t)$. It follows from \eqref{3eq12} that
\begin{equation}
\label{eq9.3}
\begin{split}
\dot{z}_h(t) =& - [F(u_\delta(t+h)) - F(u_\delta(t)) + a(t)z_h(t)] \\
&+ (a(t)-a(t+h))u_\delta(t+h),\qquad 0<t<t+h<T. 
\end{split}
\end{equation}
Multiply \eqref{eq9.3} by $z_h(t)$ and use the monotonicity of 
$F$ to get
\begin{equation}
\label{eq9.4}
\|z_h(t)\|\frac{d}{dt}\|z_h(t)\| \le -a(t)\|z_h(t)\| ^2 + 
(a(t)-a(t+h))\langle u_\delta(t+h), z_h(t) \rangle.
\end{equation}
This and \eqref{eq9.2} imply
\begin{equation}
\label{eq9.10}
\begin{split}
\frac{d}{dt}\|z_h(t)\| &\le -a(t)\|z_h(t)\| + 
(a(t)-a(t+h))\|u_\delta(t+h)\|\\
&\le - a(t)\|z_h(t)\|+ 
(a(t)-a(t+h))K_T,\qquad 0<t<t+h<T.
\end{split}
\end{equation}
From \eqref{eq9.10} and the Gronwall's inequality one obtains
\begin{equation}
\label{eq9.11}
\begin{split}
\|z_h(t)\| &\le e^{-\phi(t)} \|z_h(0)\| + e^{-\phi(t)} K_T \int_0^t e^{\phi(s)}(a(s)-a(s+h))ds,\\
&\le \|z_h(0)\| + \max_{0\le s\le T-h}(a(s)-a(s+h)) K_T e^{-\phi(t)}\int_0^t e^{\phi(s)}ds,
\end{split}
\end{equation}
for $0<t<t+h<T$. 
It follows from \eqref{eq9.11} and the uniform continuity of $a(t)$ on $[0,T]$ that 
\begin{equation}
\label{eq9.12}
\lim_{h\to 0} \|u_\delta(t+h) - u_\delta(t)\| \le \lim_{h\to 0} \|u_\delta(0+h) - u_\delta(0)\| = 0,
\end{equation}
and this relation holds uniformly with respect to $t$ and $t+h$ such that 
$t<t+h<T$. 
Here, the last equality in \eqref{eq9.12} 
follows from the fact that $u_\delta(t)$ solves \eqref{3eq12} on $[0,T)$. 
Relation \eqref{eq9.12} and the Cauchy criterion for convergence imply 
the existence of the finite limit in \eqref{eq9.1}. 

Lemma~\ref{lemma9.4} is proved.
\end{proof}

\begin{theorem}
\label{theorem1}
Let $a(t)$ 
satisfy \eqref{eqsxa}. 
Assume that $F:H\to H$ is a monotone operator satisfying condition \eqref{eqholder},
and $u_0$ is an element of $H$, 
satisfying inequality 
\begin{equation}
\label{eqj1xx}
 \|F(u_0)-f_\delta\|>C\delta^\zeta>\delta,
\end{equation}
where $C>0$ and $0<\zeta\le 1$ are constants. 
 Assume that equation $F(u)=f$ has a solution, $f$ is unknown but $f_\delta$ is given, $\|f_\delta-f\|\le \delta$.
 Let $y$ be the minimal-norm solution to \eqref{aeq1}. 
Then the solution $u_\delta(t)$ to problem \eqref{3eq12}
exists globally and 
there exists a unique $t_\delta$ such that 
\begin{equation}
\label{2eq3xx}
\|F(u_\delta(t_\delta))-f_\delta\|=C\delta^\zeta,
\quad \|F(u_\delta(t))-f_\delta\|>C\delta^\zeta,\qquad \forall t\in[0,t_\delta). 
\end{equation}

If $\zeta\in (0,1)$ and 
\begin{equation}
\label{eqok1}
\lim_{\delta\to 0} t_\delta = \infty,
\end{equation}
then
\begin{equation}
\label{eqok2}
\lim_{\delta\to 0} \|u_\delta(t_\delta) - y\| = 0.
\end{equation}
\end{theorem}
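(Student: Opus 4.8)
The plan is to treat the two assertions of the theorem separately: first the existence and uniqueness of the stopping time $t_\delta$ determined by \eqref{2eq3xx}, and then the convergence \eqref{eqok2} under the extra hypotheses $\zeta\in(0,1)$ and \eqref{eqok1}.

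\emph{Existence and uniqueness of $t_\delta$.} Write $G_\delta(t):=\|F(u_\delta(t))-f_\delta\|$; it is continuous since $u_\delta$ and $F$ are, and $G_\delta(0)>C\delta^\zeta$ by \eqref{eqj1xx}. Because $\dot u_\delta=-(F(u_\delta)+a u_\delta-f_\delta)$, one has the pointwise bound $G_\delta(t)\le\|\dot u_\delta(t)\|+a(t)\|u_\delta(t)\|$, so it suffices to prove $\limsup_{t\to\infty}G_\delta(t)\le\delta$, which is $<C\delta^\zeta$ by \eqref{eqj1xx}; then continuity and the intermediate value theorem produce a first crossing of the level $C\delta^\zeta$, and that first crossing is exactly the unique $t_\delta$ satisfying \eqref{2eq3xx}. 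For the first term I would differentiate the flow: the difference-quotient estimate \eqref{eq9.4} for $z_h(t)=u_\delta(t+h)-u_\delta(t)$, divided by $\|z_h\|$ and $h$ and passed to the limit $h\to0$, shows that $g=\|\dot u_\delta\|$ satisfies \eqref{eqw1} with $\alpha(t)=a(t)$ and $\beta(t)=|\dot a(t)|\,\|u_\delta(t)\|$; Lemma~\ref{lemma2.7} then gives
\[
\|\dot u_\delta(t)\|\le e^{-\phi(t)}\|\dot u_\delta(0)\|+e^{-\phi(t)}\int_0^t e^{\phi(s)}|\dot a(s)|\,\|u_\delta(s)\|\,ds,
\]
and inserting the bound \eqref{eq9.2} for $\|u_\delta\|$ and invoking \eqref{eql3} and \eqref{eqwer} forces $\|\dot u_\delta(t)\|\to0$. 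For the second term, $\|u_\delta\|\le\|V_\delta\|+\|w\|$ gives $a\|u_\delta\|\le a\psi+a\|w\|$, where $a\psi=\|F(V_\delta)-f_\delta\|$ tends to a limit $\le\delta$ by Lemmas~\ref{remark1} and \ref{lem1}, while $a(t)\|w(t)\|\to0$ follows from \eqref{eqoo1} and an L'Hospital computation using \eqref{eqpp1}.

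\emph{Convergence.} Let $V(t)$ denote the solution of \eqref{2eq2} for exact data (the function $V$ of Remark~\ref{rem3}) and split
\[
\|u_\delta(t_\delta)-y\|\le\|w(t_\delta)\|+\|V_\delta(t_\delta)-V(t_\delta)\|+\|V(t_\delta)-y\|.
\]
Since \eqref{eqok1} and $a\searrow0$ give $a(t_\delta)\to0$, the last term tends to $0$ by Lemma~\ref{rejectedlem}, and the middle term is $\le\delta/a(t_\delta)$ by \eqref{rejected11}. Thus it remains to establish (A) $\delta/a(t_\delta)\to0$ and (B) $\|w(t_\delta)\|\to0$.

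For (A) I use the discrepancy identity at the stopping time: by \eqref{eqholder}, $|G_\delta(t_\delta)-a(t_\delta)\psi(t_\delta)|\le C_R\|w(t_\delta)\|^\alpha$, so combining $G_\delta(t_\delta)=C\delta^\zeta$ with the a priori bound \eqref{2eq1} yields $C\delta^\zeta\le a(t_\delta)\|y\|+\delta+C_R\|w(t_\delta)\|^\alpha$. Because $\zeta<1$ we have $\delta=o(\delta^\zeta)$, so once the Hölder correction $C_R\|w(t_\delta)\|^\alpha$ is shown to be $o(\delta^\zeta)$ this forces $a(t_\delta)\ge c\,\delta^\zeta/\|y\|$ for small $\delta$, whence $\delta/a(t_\delta)\le c'\delta^{1-\zeta}\to0$. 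For (B) I would sharpen the crude bound \eqref{eqoo1} to
\[
\|w(t)\|\le C\,\frac{|\dot a(t)|}{a^2(t)}\,\|V_\delta(t)\|+\bigl(\text{terms vanishing as }t\to\infty\bigr),
\]
obtained by integrating the inequality inside \eqref{eqoo3} against the weight $\varphi(t)=(1-q)\int_0^t a$ and integrating by parts in the manner of Lemma~\ref{lemauxi2}. Granting this, \eqref{eqok1} makes $|\dot a(t_\delta)|/a^2(t_\delta)\to0$ by \eqref{eqpp1}, while $\|V_\delta(t_\delta)\|\le\|y\|+\delta/a(t_\delta)$ is bounded once (A) holds, so $\|w(t_\delta)\|\to0$.

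I expect the main obstacle to be the interlocking of (A) and (B). The sharp tracking estimate for $\|w\|$ demanded in (B) is precisely what controls the Hölder correction $C_R\|w(t_\delta)\|^\alpha$ entering (A), while (B) in turn needs the boundedness of $\|V_\delta(t_\delta)\|$ supplied by (A). I would break this loop by a bootstrap: substitute the refined bound for $\|w(t_\delta)\|$ into the inequality of (A) and exploit the sublinearity coming from $\alpha<1$ together with $|\dot a|/a^2\searrow0$ to solve for $a(t_\delta)$; the hypothesis $\alpha>1/2$ is what guarantees that $\|w(t_\delta)\|^\alpha$ is negligible against $\delta^\zeta$ so that the estimates close. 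A secondary technical point, needed before \eqref{eqholder} may be applied along the trajectories, is to check that $u_\delta(t_\delta)$ and $V_\delta(t_\delta)$ remain in $B(y,R)$; this follows from the a priori bounds \eqref{2eq1} and \eqref{eq9.2} once (A) is in hand.
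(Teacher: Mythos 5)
Your existence/uniqueness half is essentially correct and in fact takes a genuinely different route from the paper's. The paper obtains the decay of $h(t)=\|F(u_\delta(t))+a(t)u_\delta(t)-f_\delta\|=\|\dot u_\delta(t)\|$ by differentiating $h^2$ along the flow, and this is precisely where the H\"older continuity of order $\alpha>1/2$ enters (the monotonicity trick \eqref{eqoo9} combined with the estimates \eqref{eqoo10}--\eqref{eqoo12}, where $O(|\xi|^{2\alpha})/\xi\to 0$ needs $\alpha>1/2$). You instead extract the same integral bound \eqref{eqoo15} from the difference-quotient machinery \eqref{eq9.3}--\eqref{eq9.11} of Lemma~\ref{lemma9.4}, which uses only monotonicity and continuity of $F$ — an interesting observation, since it suggests this step does not need the H\"older hypothesis at all. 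One technical correction: you cannot pass the pointwise differential inequality \eqref{eqw1} to the limit function $g=\lim_{h\to 0}\|z_h\|/h$ directly (a $\limsup$ difference-quotient inequality does not survive pointwise limits without uniformity); instead integrate \eqref{eq9.10} first, as the paper does in \eqref{eq9.11}, and only then divide by $h$ and let $h\to 0$, using dominated convergence on $[0,t]$. With that fix, and noting that your treatment of $a(t)\|u_\delta(t)\|$ could be shortened by invoking $a\|w\|\le h$ from \eqref{beq35} rather than an L'Hospital computation, the first half stands.

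The convergence half, however, has a genuine gap at your step (A). You compare the discrepancies of $u_\delta$ and $V_\delta$ via the H\"older bound $\|F(u_\delta(t_\delta))-F(V_\delta(t_\delta))\|\le C_R\|w(t_\delta)\|^\alpha$ and then need $C_R\|w(t_\delta)\|^\alpha=o(\delta^\zeta)$. This is unobtainable: hypothesis \eqref{eqok1} says only that $t_\delta\to\infty$ and carries no rate linking $t_\delta$ to $\delta$, so your sharpened tracking estimate yields at best $\|w(t_\delta)\|\le \mathrm{const}\cdot\frac{|\dot a(t_\delta)|}{a^2(t_\delta)}\|V_\delta(t_\delta)\|+o(1)$, a quantity tending to zero at an unknown rate entirely unrelated to $\delta^\zeta$; your proposed bootstrap therefore cannot close, and the role you assign to $\alpha>1/2$ there is spurious — in the paper $\alpha>1/2$ is used only in \eqref{eqoo12}, a step your existence argument bypassed. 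The paper avoids the rate problem by never invoking H\"older at this point: monotonicity gives the free estimate $\|F(u_\delta)-F(V_\delta)\|\le h$ (second inequality in \eqref{1eq16}), and \eqref{eqok1} together with \eqref{eql6} and \eqref{eqwer2} yields $h(t_\delta)\le 3a(t_\delta)\|V_\delta(t_\delta)\|$ (inequalities \eqref{eqvv2}--\eqref{eqvv1'}), whence $C\delta^\zeta\le 4\big(a(t_\delta)\|y\|+\delta\big)$ as in \eqref{eql10}; since $\zeta<1$ this forces $\delta/a(t_\delta)\to 0$ and the uniform bound $\|V_\delta(t)\|\le 5\|y\|$ on $[0,t_\delta]$, after which your step (B) and your final decomposition (identical to the paper's \eqref{eqhic56}, with Lemma~\ref{rejectedlem} and \eqref{rejected11}) go through. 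Replace your H\"older comparison by \eqref{1eq16} and bound $h(t_\delta)$ as in \eqref{eqvv2}--\eqref{eqvv1'}, and the circularity you flagged between (A) and (B) disappears.
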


\begin{remark}
{\rm
Inequality \eqref{eqj1xx} is not a restrictive assumption. Indeed, if it does not hold 
and $\|u_0\|$ is not too large, then $u_0$ can be considered as an approximate solution to \eqref{aeq1}. 

}
\end{remark}

\begin{proof}
The uniqueness of $t_\delta$ follows from \eqref{2eq3xx}. 
Indeed, if $t_\delta$ and $\tau_\delta>t_\delta$ both satisfy \eqref{2eq3xx}, then 
the second inequality in \eqref{2eq3xx} does not hold on the interval $[0,\tau_\delta)$.

{\it Let us verify the existence of $t_\delta$.} 

Denote 
\begin{equation}
\label{xex1}
v:=F(u_\delta)+a u_\delta - f_\delta,\quad h=\|v\|.
\end{equation}
We have
\begin{equation}
\label{eqoo8}
\begin{split}
\limsup_{\xi\to 0}&\frac{h^2(t+\xi ) - h^2(t)}{\xi } 
= \limsup_{\xi\to 0}\frac{\langle v(t+\xi ) - v(t), 
 v(t+\xi ) + v(t)\rangle }{\xi }\\
 \le &\limsup_{\xi\to 0}\frac{\langle F(u_\delta(t+\xi )) - F(u_\delta(t)), 
 v(t+\xi ) + v(t)\rangle }{\xi }\\
 &+   2\langle a\dot{u}_\delta(t), v(t)\rangle + 2\langle \dot{a}(t)u_\delta(t),v(t)\rangle .
 \end{split}
\end{equation}
From \eqref{3eq12} and \eqref{xex1} one gets $u_\delta(t+\xi) - u_\delta(t) = -\int_t^{t+\xi}v(s)ds$. 
This and the monotonicity of $F$ imply 
\begin{equation}
\label{eqoo9}
\bigg{\langle} F(u_\delta(t+\xi )) - F(u_\delta(t)), \int_t^{t+\xi} v(s)ds \bigg{\rangle} \le 0.
\end{equation}
Since $F$ is H\"{o}lder continuous of order $\alpha$ and $u_\delta(t)$ is differentiable one obtains
\begin{equation}
\label{eqoo10}
\big{\|}F(u_\delta(t+\xi)) - F(u_\delta(t))\big{\|} = O(|\xi|^\alpha),
\end{equation}
and
\begin{equation}
\label{eqoo11}
\bigg{\|}2\int_t^{t+\xi} v(s)ds - \xi\big{[}v(t+\xi) + v(t)\big{]} \bigg{\|}= O(|\xi|^{1+\alpha}).
\end{equation}
Relations \eqref{eqoo10}, \eqref{eqoo11} and the inequality $\alpha>1/2$ imply
\begin{equation}
\label{eqoo12}
\lim_{\xi\to 0} \frac{\langle F(u_\delta(t+\xi )) - F(u_\delta(t)), 
 v(t+\xi ) + v(t)  - \frac{2}{\xi}\int_t^{t+\xi}v(s)ds\rangle }{\xi } = 0.
\end{equation}
From \eqref{eqoo9} and \eqref{eqoo12} we get 
\begin{equation}
\label{eqoo13}
\limsup_{\xi\to 0}\frac{\langle F(u_\delta(t+\xi )) - F(u_\delta(t)), 
 v(t+\xi ) + v(t)\rangle }{\xi } \le 0.
\end{equation}
This, the relation $\dot{u}_\delta = -v$ (see \eqref{3eq12}), and \eqref{eqoo8} imply
\begin{equation}
\label{eqoo14}
\limsup_{\xi\to 0} \frac{h^2(t+\xi ) - h^2(t)}{\xi } 
\le -2 a(t)h^2(t) + 2|\dot{a}(t)|\|u_\delta(t)\|h(t). 
\end{equation}
This, Lemma~\ref{lemma2.7}, and \eqref{eq9.2} imply
\begin{equation}
\label{eqoo15}
h(t)\le e^{-\phi(t)}h(0) +  e^{-\phi(t)}\int_0^t e^{\phi(s)} |\dot{a}(s)|\bigg{(}\|w(0)\|) + K\|V_\delta(s)\|\bigg{)}ds.
\end{equation}

Since $\langle F(u_\delta)-F(V_\delta),u_\delta-V_\delta\rangle \ge 0$, one obtains two inequalities
\begin{equation}
\label{beq35}
a\|u_\delta - V_\delta\|^2 \le \langle v, u_\delta-V_\delta \rangle \le 
\|u_\delta - V_\delta\|h,
\end{equation}
and
\begin{equation}
\label{beq36}
\|F(u_\delta)-F(V_\delta)\|^2\le \langle v, F(u_\delta)-F(V_\delta) \rangle
\le h\|F(u_\delta)-F(V_\delta)\|.
\end{equation}
Inequalities \eqref{beq35} and \eqref{beq36} imply: 
\begin{equation}
\label{1eq16}
a\|u_\delta-V_\delta\|\le h,\quad \|F(u_\delta)-F(V_\delta)\|\le h.
\end{equation} 

The triangle inequality, the second inequality in \eqref{1eq16}, 
and \eqref{eqoo15} imply
\begin{equation}
\label{eql1}
\begin{split}
\|F(u_\delta(t)) - f_\delta\| \le
& \|F(V_\delta(t))-f_\delta\| + \|F(u_\delta)-F(V_\delta)\|\\
\le
& \|F(V_\delta(t))-f_\delta\| + h\\
\le & \|F(V_\delta(t))-f_\delta\| + 
h(0)e^{-\phi(t)}\\
& + e^{-\phi(t)}\int_0^t 
e^{\phi(s)}|\dot{a}(s)|\bigg{(}\|w(0)\|+ K\|V_\delta(s)\|\bigg{)}ds.
\end{split}
\end{equation}
This, \eqref{eql3}, \eqref{eqsxaz}, Lemma~\ref{lem1}, and \eqref{eqwer} imply
\begin{equation}
\label{eq**0}
\lim_{t\to\infty} \|F(u_\delta(t)) - f_\delta\| \le \lim_{t\to\infty} \|F(V_\delta(t))-f_\delta\| \le \delta.
\end{equation}
The existence of $t_\delta$ satisfying \eqref{2eq3xx} follows from \eqref{eq**0} 
and the continuity of the function $\|F(u_\delta(t)) - f_\delta\|$. 


{\it Let us prove \eqref{eqok2} given that \eqref{eqok1} holds.}

From \eqref{eqok1}, \eqref{eql6} with $r=1$, and the 
inequality $\|V_\delta(t)\|\ge \|V_\delta(0)\|>0$, $t\ge 0$, one gets, for all sufficiently small $\delta>0$, the following inequality:
\begin{equation}
\label{eqvv2}
h(0)e^{-\phi(t_\delta)}\le a(t_\delta)\|V_\delta(0)\| \le a(t_\delta)\|V_\delta(t_\delta)\|. 
\end{equation}
From the fact that $\|V_\delta(t)\|$ is a nondecreasing function of $t$, 
\eqref{eqwer2}, and \eqref{eqok1} 
one obtains
\begin{equation}
\label{eqvv1}
\begin{split}
Ke^{-\phi(t_\delta)}\int_0^{t_\delta} e^{\phi(s)}|\dot{a}(s)|
\|V_\delta(s)\| ds
&\le K\|V_\delta(t_\delta)\| e^{-\phi(t_\delta)}\int_0^{t_\delta} e^{\phi(s)}|\dot{a}(s)| ds\\
&\le a(t_\delta)\|V_\delta(t_\delta)\|,
\end{split}
\end{equation}
for all sufficiently small $\delta>0$. 
From \eqref{eqwer2} and \eqref{eqok1}
one gets, for all sufficiently small $\delta>0$, the following inequality
\begin{equation}
\label{eqvv1'}
\|w(0)\|e^{-\phi(t_\delta)}\int_0^{t_\delta} e^{\phi(s)}|\dot{a}(s)| ds
\le a(t_\delta)\|V_\delta(0)\|
\le a(t_\delta)\|V_\delta(t_\delta)\|.
\end{equation}


From \eqref{eqvv2}--\eqref{eqvv1'}, \eqref{2eq1}, and \eqref{eql1} with $t=t_\delta$, one obtains
\begin{equation}
\label{eql10}
C \delta^\zeta \le 4a(t_\delta)\|V_\delta(t_\delta)\| 
\le 4\bigg{(}a(t_\delta)\|y\| + \delta\bigg{)}.
\end{equation}
This and the relation $\lim_{\delta\to0}\frac{\delta^\zeta}{\delta} =\infty$
for a fixed $\zeta\in(0,1)$ imply
\begin{equation}
\label{eql11}
\lim_{\delta\to 0}\frac{\delta^\zeta}{a(t_\delta)} \le \frac{4\|y\|}{C}.
\end{equation}
Relation \eqref{eql11} and the first inequality in \eqref{2eq1} imply, for sufficiently small $\delta>0$,
the following inequality
\begin{equation}
\label{eql12}
\|V_\delta(t)\| \le \|y\|+\frac{\delta}{a(t_\delta)} < \|y\|+\frac{C\delta^\zeta}{a(t_\delta)}<
5\|y\|,\qquad 
0\le t\le t_\delta. 
\end{equation}
This implies
\begin{equation}
\label{eql13}
\lim_{\delta\to 0} \frac{\int_0^{t_\delta} e^{\phi(s)}|\dot{a}(s)|\|V_\delta(s)\|ds}
{e^{\phi(t_\delta)}a(t_\delta)} \le
5\|y\| 
\lim_{\delta\to 0}
\frac{\int_0^{t_\delta} e^{\phi(s)}|\dot{a}(s)|ds}
{e^{\phi(t_\delta)}a(t_\delta)}.
\end{equation}
It follows from \eqref{eql9} and \eqref{eql13} that
\begin{equation}
\label{eql15}
\lim_{\delta\to 0} \frac{\int_0^{t_\delta} e^{\phi(s)}|\dot{a}(s)|\|V_\delta(s)\|ds}
{e^{\phi(t_\delta)}a(t_\delta)} = 0.
\end{equation}

It follows from \eqref{1eq16} and \eqref{eqoo15} that 
$$
\|u_\delta(t) - V_\delta(t)\| \le h(0)\frac{e^{-\phi(t)}}{a(t)} +  
\frac{e^{-\phi(t)}}{a(t)}\int_0^t e^{\phi(s)} |\dot{a}(s)|\bigg{(}\|w(0)\| + K\|V_\delta(s)\|\bigg{)}ds.
$$
This, \eqref{eql9}, \eqref{eql6} with $r=1$, and \eqref{eql15} imply that
\begin{equation}
\label{eqj6}
\lim_{\delta\to 0} \|u_\delta(t_\delta) - V_\delta(t_\delta)\| = 0.
\end{equation}
From \eqref{eql11} one gets
\begin{equation}
\label{eqoo30}
\lim_{\delta\to 0}\frac{\delta}{a(t_\delta)} = 0.
\end{equation}

Now let us finish the proof of Theorem~\ref{theorem1}.

From the triangle inequality and inequality 
\eqref{rejected11} one obtains:
\begin{equation}
\label{eqhic56}
\begin{split}
\|u_\delta(t_\delta) - y\| &\le \|u_{\delta}(t_\delta) - V_{\delta}(t_\delta)\| + 
\|V(t_\delta) - V_\delta(t_\delta)\| + \|V(t_\delta) - y\|\\
&\le \|u_{\delta}(t_\delta) - V_{\delta}(t_\delta)\| + \frac{\delta}{a(t_\delta)} + \|V(t_\delta)-y\|.
\end{split}
\end{equation}
From \eqref{eqj6}--\eqref{eqhic56}, \eqref{eqok1}, and Lemma~\ref{rejectedlem}, one obtains
\eqref{eqok2}. Theorem~\ref{theorem1} is proved. 
\end{proof}

Assume that $a(t)$ satisfies the following conditions
\begin{equation}
\label{eqvv3}
0<a(t)\searrow 0,\qquad \frac{1}{3}>q > \frac{|\dot{a}(t)|}{a^2(t)}\searrow 0.
\end{equation}

\begin{remark}
{\rm
Let $a(t)=\frac{d}{(c+t)^b}$, 
where $b\in(0,1)$,\, 
$c>0$ and $d>bq^{-1}c^{b-1}$. Then this $a(t)$ satisfies \eqref{eqvv3}. 
}
\end{remark}

\begin{theorem}
\label{theorem2}
Let $a(t)$ 
satisfy \eqref{eqvv3}. 
Let $F$, $f$, $f_\delta$ be as in Theorem~\ref{theorem1}. 
Assume that $u_0\in H$  
satisfies either 
\begin{equation}
\label{eqj1}
\|F(u_0) + a(0)u_0 -f_\delta\| \le pa(0)\|V_\delta(0)\|,\qquad 0<p<1-\frac{q}{1-2q},
\end{equation}
or 
\begin{equation}
\label{eqj1xxx}
\|F(u_0) + a(0)u_0 -f_\delta\| \le \theta \delta^\zeta,\qquad 0\le \theta< C, 
\end{equation}
where $C>0$ is the constant from Theorem~\ref{theorem1}. 
Let $t_\delta$ be defined by \eqref{2eq3xx}. 
Then
\begin{equation}
\label{2eq4}
\lim_{\delta\to 0} t_\delta =\infty.
\end{equation}
\end{theorem}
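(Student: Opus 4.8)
The plan is to establish \eqref{2eq4} by showing instead that $a(t_\delta)\to 0$ as $\delta\to 0$; since $a(t)$ is strictly decreasing to $0$, this is equivalent to $t_\delta\to\infty$. Write $h(t):=\|F(u_\delta(t))+a(t)u_\delta(t)-f_\delta\|$ as in \eqref{xex1}. From \eqref{2eq2} one has $\|F(V_\delta(t))-f_\delta\|=a(t)\|V_\delta(t)\|$, and the second inequality in \eqref{1eq16} gives $\|F(u_\delta)-F(V_\delta)\|\le h$, so the triangle inequality yields the lower bound
\[
\|F(u_\delta(t))-f_\delta\|\ge a(t)\|V_\delta(t)\|-h(t),\qquad t\ge 0.
\]
Everything then reduces to a suitable one-sided bound of $h(t)$ in terms of $a(t)\|V_\delta(t)\|$.

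To obtain such a bound I would sharpen \eqref{eqoo15}. Starting from \eqref{eqoo14} and using $\|u_\delta\|\le\|V_\delta\|+h/a$ (the first inequality in \eqref{1eq16}), one gets
\[
\limsup_{\xi\to 0}\frac{h^2(t+\xi)-h^2(t)}{\xi}\le -2a(t)h^2(t)+2\tfrac{|\dot a(t)|}{a(t)}h^2(t)+2|\dot a(t)|\,\|V_\delta(t)\|\,h(t).
\]
The role of hypothesis \eqref{eqvv3} is that $\frac{|\dot a|}{a}<qa$, so the middle term is absorbed into the first and the right-hand side is $\le -2(1-q)a(t)h^2(t)+2|\dot a(t)|\|V_\delta(t)\|h(t)$. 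Applying Lemma~\ref{lemma2.7} with $\alpha(t)=(1-q)a(t)$ and $\beta(t)=|\dot a(t)|\|V_\delta(t)\|$ produces precisely the weight $\tilde\varphi=\varphi=(1-q)\int_0^t a(s)\,ds$ of Lemma~\ref{lemauxi2}; since $q<1/3<1/2$ that lemma applies and bounds the integral term, giving
\[
h(t)\le h(0)e^{-\varphi(t)}+\frac{q}{1-2q}\,a(t)\|V_\delta(t)\|,\qquad t\ge 0.
\]

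It remains to control the transient $h(0)e^{-\varphi(t)}$ under each hypothesis. Under \eqref{eqj1xxx} this is immediate, $h(0)e^{-\varphi(t)}\le\theta\delta^\zeta$. Under \eqref{eqj1} I would use a slow-decay estimate for $a(t)\|V_\delta(t)\|$: since $\|V_\delta(t)\|$ is increasing (Lemma~\ref{remark1}) and $\frac{|\dot a|}{a}<qa$, one has $a(t)\|V_\delta(t)\|\ge a(0)\|V_\delta(0)\|e^{-q\phi(t)}$ with $\phi(t)=\int_0^t a(s)\,ds$; because $1-q>q$ this gives $h(0)e^{-\varphi(t)}\le p\,a(0)\|V_\delta(0)\|e^{-(1-q)\phi(t)}\le p\,a(t)\|V_\delta(t)\|$, whence $h(t)\le\gamma\,a(t)\|V_\delta(t)\|$ with $\gamma:=p+\frac{q}{1-2q}<1$ by the constraint on $p$ in \eqref{eqj1}. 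Combining with the lower bound of the first paragraph, in both cases $\|F(u_\delta(t))-f_\delta\|\ge c_1 a(t)\|V_\delta(t)\|-c_2\delta^\zeta$ with $c_1>0$ fixed ($c_1=1-\gamma$, $c_2=0$ under \eqref{eqj1}; $c_1=1-\frac{q}{1-2q}>0$, $c_2=\theta$ under \eqref{eqj1xxx}).

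Evaluated at $t=0$ the last inequality gives $\|F(u_0)-f_\delta\|\ge c_1 a(0)\|V_\delta(0)\|-c_2\delta^\zeta>C\delta^\zeta$ for all small $\delta$, so \eqref{eqj1xx} holds and $t_\delta$ is well defined by Theorem~\ref{theorem1}. Evaluated at $t=t_\delta$, where $\|F(u_\delta(t_\delta))-f_\delta\|=C\delta^\zeta$ by \eqref{2eq3xx}, and using $\|V_\delta(t_\delta)\|\ge\|V_\delta(0)\|$, one obtains $a(t_\delta)\le\frac{(C+c_2)\delta^\zeta}{c_1\|V_\delta(0)\|}$. Since $V_\delta(0)\to V(0)\ne 0$ by \eqref{rejected11}, so that $\liminf_{\delta\to 0}\|V_\delta(0)\|>0$, and $\delta^\zeta\to 0$, this forces $a(t_\delta)\to 0$, i.e. \eqref{2eq4}. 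The main obstacle is the second paragraph: extracting the improved contraction factor $(1-q)$ in the Gronwall exponent by folding $\frac{|\dot a|}{a}h^2$ into the dissipative term via $\frac{|\dot a|}{a^2}<q$, and arranging this exponent to coincide exactly with $\varphi$ so that Lemma~\ref{lemauxi2} delivers the constant $\frac{q}{1-2q}$. This is why $q<1/3$ (rather than merely $q<1/2$) is the correct hypothesis, since only then is $\frac{q}{1-2q}<1$ and the coefficient $c_1$ strictly positive.
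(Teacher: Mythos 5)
Your proof is correct and follows essentially the same route as the paper's: the same differential inequality \eqref{eqoo17} obtained by absorbing the $\frac{|\dot{a}|}{a}h^2$ term via $\frac{|\dot{a}|}{a^2}<q$, Gronwall's inequality (Lemma~\ref{lemma2.7}) with exponent $\varphi=(1-q)\phi$, Lemma~\ref{lemauxi2} for the integral term, and the transient bound $h(0)e^{-\varphi(t)}\le p\,a(t)\|V_\delta(t)\|$ (your slow-decay estimate $a(t)\ge a(0)e^{-q\phi(t)}$ is equivalent to the paper's \eqref{26eq5}), culminating in $C\delta^\zeta\ge \big(1-p-\frac{q}{1-2q}\big)a(t_\delta)\|V_\delta(t_\delta)\|$. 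Your endgame---using $\|V_\delta(t_\delta)\|\ge\|V_\delta(0)\|$ and \eqref{rejected11} at $t=0$ rather than the paper's passage through $a(t_\delta)\|V(t_\delta)\|$ and the monotonicity of $\|V(t)\|$---is a harmless variant resting, like the paper's own step $\|V(0)\|>0$, on the implicit assumption $F(0)\ne f$.
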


\begin{remark}{\rm
One can easily choose $u_0$ satisfying inequality \eqref{eqj1}.
Indeed, \eqref{eqj1} holds if $u_0$ is sufficiently close to $V_\delta(0)$.
Note that inequality \eqref{eqj1} is a sufficient condition for 
\eqref{1eq20}, i.e.,
\begin{equation}
\label{eqj2}
e^{-\varphi(t)}h(0)\le pa(t)\|V_\delta(t)\|,\qquad t\ge 0, 
\end{equation}
to hold. In our proof inequality \eqref{eqj2} (or \eqref{1eq20})
is used at $t=t_\delta$.
The stopping time $t_\delta$ is often sufficiently large for 
the quantity $e^{\varphi(t_\delta)}a(t_\delta)$ to be large. 
 In this case  
inequality \eqref{eqj2} with $t=t_\delta$ is satisfied for a wide range of 
$u_0$. Note that by \eqref{eql6} one gets $\lim_{t\to\infty}e^{\varphi(t)}a(t)=\infty$. Here $\varphi(t) = (1-q)\phi(t)$ (see also \eqref{eqoo2} and \eqref{eq**}). 
}
\end{remark}

\begin{proof}[Proof of Theorem~\ref{theorem2}]
{\it Let us prove \eqref{2eq4} assuming that \eqref{eqj1} holds.} 
The proof goes similarly when \eqref{eqj1xxx} holds instead of \eqref{eqj1}. 

%
%

From \eqref{eqoo14} and the triangle inequality one gets
\begin{equation}
\label{eqoo16}
\begin{split}
\limsup_{\xi\to 0} \frac{h^2(t+\xi ) - h^2(t)}{\xi } 
\le& -2 a(t)h^2(t) + 
 2|\dot{a}(t)|\|V_\delta(t)\|h(t)\\
 &+ 2|\dot{a}(t)|\|u_\delta(t) - V_\delta(t)\|h(t)
\end{split}
\end{equation}
This and the first inequality in \eqref{1eq16} imply
\begin{equation}
\label{eqoo17}
\limsup_{\xi\to 0} \frac{h^2(t+\xi ) - h^2(t)}{\xi } 
\le -2 \bigg{(}a(t) - \frac{|\dot{a}(t)|}{a(t)}\bigg{)}h^2(t) + 
 2|\dot{a}(t)|\|V_\delta(t)\|h(t)
\end{equation}
%
Since $a-\frac{|\dot{a}|}{a}\ge (1-q)a$,  
by \eqref{eqvv3}, 
it follows from \eqref{eqoo17} and Lemma~\ref{lemma2.7} that
\begin{equation}
\label{1eq17}
h(t)\le h(0)e^{-\varphi(t)} + e^{-\varphi(t)}
\int_0^t e^{\varphi(s)}|\dot{a}(s)|\|V_\delta(s)\|ds,
\end{equation}
where
\begin{equation}
\label{eq**}
\varphi(t):=\int_0^t(1-q)a(s)ds = (1-q)\phi(t),\qquad t>0. 
\end{equation}
From \eqref{1eq17} and 
\eqref{1eq16}, one gets
\begin{equation}
\label{eqj5}
\|F(u_\delta(t))-F(V_\delta(t))\| \le 
h(0)e^{-\varphi(t)} + e^{-\varphi(t)}\int_0^t 
e^{\varphi(s)}|\dot{a}(s)|\|V_\delta(s)\|ds.
\end{equation}

It follows from inequality \eqref{eqj5} and the triangle inequality that
\begin{equation}
\label{1eq18}
\begin{split}
\|F(u_\delta(t))-f_\delta\|&\ge \|F(V_\delta(t))-f_\delta\|-\|F(V_\delta(t))-F(u_\delta(t))\|\\
&\ge a(t)\|V_\delta(t)\| - h(0)e^{-\varphi(t)} - 
e^{-\varphi(t)}\int_0^t e^{\varphi(s)} |\dot{a}| \|V_\delta\|ds.
\end{split}
\end{equation}
Since $a(t)$ satisfies \eqref{eqvv3} one gets by Lemma~\ref{lemauxi2} 
the following inequality
\begin{equation}
\label{1eq19}
\frac{q}{1-2q}a(t)\|V_\delta(t)\| \ge  e^{-\varphi(t)}\int_0^te^{\varphi(s)}|\dot{a}| \|V_\delta(s)\|ds.
\end{equation}

From the relation $h(t)=\|F(u_\delta(t))+a(t)u_\delta(t) -f_\delta\|$ (cf. \eqref{xex1}) 
and inequality \eqref{eqj1} one gets
\begin{equation}
\label{23eq6}
h(0)e^{-\varphi(t)}\le pa(0)\|V_\delta(0)\|e^{-\varphi(t)},\qquad 
t\ge 0.
\end{equation}
It follows from \eqref{eqsxa} that 
\begin{equation}
\label{26eq5}
e^{-\varphi(t)}a(0)\le a(t).
\end{equation}
Indeed, inequality $a(0)\le a(t)e^{\varphi(t)}$ is obviously true for 
$t=0$, and 
$$
\big(a(t)e^{\varphi(t)}\big)'_t = a^2(t)e^{\varphi(t)}\bigg{(}1-q - \frac{|\dot{a}(t)|}{a^2(t)}\bigg{)}\geq 0,
$$
by \eqref{eqsxa}. 
Here, we have used the relation $\dot{a} = -|\dot{a}|$ and 
the inequality $1-q>q$. 

Inequalities \eqref{23eq6} and \eqref{26eq5} imply
\begin{equation}
\label{1eq20}
e^{-\varphi(t)}h(0) 
\le p a(t)\|V_\delta(0)\|
\le p a(t)\|V_\delta(t)\|,\quad t\ge 0,
\end{equation}
where we have used the inequality $\|V_\delta(t)\|\le \|V_\delta(t')\|$ for $t\le t'$, 
established in Lemma~\ref{remark1}.
From \eqref{2eq3xx} and \eqref{1eq18}, \eqref{1eq19} and \eqref{1eq20}, one gets
$$
C\delta^\zeta = \|F(u_\delta(t_\delta))-f_\delta\|\ge (1 - p - \frac{q}{1-2q})a(t_\delta)\|V_\delta(t_\delta)\|.
$$
Thus,
\begin{equation}
\label{xex3}
\lim_{\delta\to0}a(t_\delta)\|V_\delta(t_\delta)\| = 0.
\end{equation}
From \eqref{rejected11} and the triangle inequality we obtain
\begin{equation}
\label{xex2}
a(t_\delta)\|V(t_\delta)\| \le a(t_\delta)\|V_\delta(t_\delta)\| + a(t_\delta)\|V(t_\delta)-V_\delta(t_\delta)\|
\le  a(t_\delta)\|V_\delta(t_\delta)\| +\delta.
\end{equation}
This and \eqref{xex3} imply
\begin{equation}
\label{exe4}
\lim_{\delta\to0}a(t_\delta)\|V(t_\delta)\| = 0.
\end{equation}
Since $\|V(t)\|$ is increasing and $\|V(0)\|>0$, relation \eqref{exe4} implies 
$\lim_{\delta\to0}a(t_\delta)=0$. 
Since $0<a(t)\searrow 0$, it follows that  
\eqref{2eq4} holds. 

Theorem~\ref{theorem2} is proved.
\end{proof}

If $F$ is a monotone operator then $F_1(u)=F(u + \bar{u})$, where $\bar{u}\in H$, is also
a monotone operator. Consider the following Cauchy problem
\begin{equation}
\label{eqv5}
\dot{u} = - (F(u) + a(t)(u - \bar{u}) -f_\delta),\qquad n\ge 0.
\end{equation}
Applying Theorem~\ref{theorem1} and Theorem~\ref{theorem2} for $F_1$ one gets the following corollaries:

\begin{corollary}
\label{corollary1}
Let $\bar{u}\in H$ be arbitrary and $y^*$ be the solution to \eqref{aeq1} with minimal distance to $\bar{u}$. 
Let $a(t)$ 
satisfy \eqref{eqsxa}. 
Assume that $F:H\to H$ is a monotone operator satisfying condition \eqref{eqholder},
and $u_0$ is an element of $H$, 
satisfying the inequality:
\begin{equation}
\label{ehjk1}
 \|F(u_0)-f_\delta\|>C\delta^\zeta>\delta,
\end{equation}
where $C>0$ and $0<\zeta\le 1$ are constants. 

Then the solution $u_\delta(t)$ to problem \eqref{eqv5}
exists globally, 
and 
there exists a unique $t_\delta>0$ such that 
\begin{equation}
\label{ehjk2}
\|F(u_\delta(t_\delta))-f_\delta\|=C\delta^\zeta,
\quad \|F(u_\delta(t))-f_\delta\|>C\delta^\zeta,\qquad \forall t\in[0,t_\delta). 
\end{equation}

If $\zeta\in (0,1)$ and 
\begin{equation}
\label{ehjk3}
\lim_{\delta\to 0} t_\delta = \infty,
\end{equation}
then
\begin{equation}
\label{ehjk4}
\lim_{\delta\to 0} \|u_\delta(t_\delta) - y^*\| = 0.
\end{equation}
\end{corollary}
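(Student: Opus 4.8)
The plan is to reduce Corollary~\ref{corollary1} to Theorem~\ref{theorem1} by the change of variable that absorbs the shift $\bar{u}$. As noted in the paragraph preceding the statement, $F_1(u):=F(u+\bar{u})$ is monotone. I would first observe that $F_1$ is also locally H\"older continuous of order $\alpha$: if $u,v$ lie in a ball centered at $y^*-\bar{u}$, then $u+\bar{u}$ and $v+\bar{u}$ lie in the corresponding ball centered at $y^*$, and
\[
\|F_1(u)-F_1(v)\| = \|F(u+\bar{u})-F(v+\bar{u})\| \le C_R\|u-v\|^\alpha .
\]
Hence $F_1$ satisfies every hypothesis that Theorem~\ref{theorem1} places on $F$.

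Next I would set $v_\delta(t):=u_\delta(t)-\bar{u}$ and verify that \eqref{eqv5} is precisely the Cauchy problem \eqref{3eq12} written for $F_1$,
\[
\dot{v}_\delta = -\big(F_1(v_\delta)+a(t)v_\delta-f_\delta\big),\qquad v_\delta(0)=u_0-\bar{u}.
\]
Since $F(u)=f$ is equivalent to $F_1(u-\bar{u})=f$, translation by $-\bar{u}$ is an isometry carrying $\mathcal{N}=\{u:F(u)=f\}$ onto the solution set of $F_1(\cdot)=f$. Minimizing $\|v\|$ over $F_1(v)=f$ therefore amounts to minimizing $\|u-\bar{u}\|$ over $u\in\mathcal{N}$, whose unique minimizer is $y^*$; thus the minimal-norm solution of $F_1(\cdot)=f$ is $y_1:=y^*-\bar{u}$. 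I would also record the identity $F(u_\delta(t))=F_1(v_\delta(t))$, which in particular turns \eqref{ehjk1} into the hypothesis \eqref{eqj1xx} of Theorem~\ref{theorem1} for $F_1$ and $v_\delta(0)$.

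With these identifications Theorem~\ref{theorem1} applies verbatim to $F_1$ and $v_\delta$. It gives the global existence of $v_\delta(t)$, hence of $u_\delta(t)=v_\delta(t)+\bar{u}$, and the existence of a unique $t_\delta$ satisfying \eqref{2eq3xx} for $F_1$; rewriting these relations through $F(u_\delta(t))=F_1(v_\delta(t))$ yields \eqref{ehjk2}, and the strict inequality at $t=0$ forces $t_\delta>0$. Finally, when $\zeta\in(0,1)$ and \eqref{ehjk3} holds, Theorem~\ref{theorem1} delivers $\lim_{\delta\to0}\|v_\delta(t_\delta)-y_1\|=0$; since $v_\delta(t_\delta)-y_1=u_\delta(t_\delta)-y^*$, this is exactly \eqref{ehjk4}.

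The argument is almost entirely bookkeeping, and I expect no real obstacle beyond the one substantive point: the correct identification of the minimal-norm solution of $F_1(\cdot)=f$ with $y^*-\bar{u}$. This rests on the convexity and closedness of $\mathcal{N}$ (guaranteed for monotone continuous $F$), which ensures that the closest point of $\mathcal{N}$ to $\bar{u}$ exists and is unique, so that $y_1$ is well defined and the translation correspondence is unambiguous.
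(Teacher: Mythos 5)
Your proposal is correct and follows exactly the route the paper intends: the paper itself gives only the one-line remark that applying Theorem~\ref{theorem1} to $F_1(u)=F(u+\bar{u})$ yields the corollary, and your argument supplies precisely the bookkeeping that remark suppresses (the change of variable $v_\delta=u_\delta-\bar{u}$ turning \eqref{eqv5} into \eqref{3eq12}, the monotonicity and H\"older continuity of $F_1$, and the identification of the minimal-norm solution of $F_1(\cdot)=f$ with $y^*-\bar{u}$ via the closedness and convexity of $\mathcal{N}$). The last identification is the one substantive point, and you handle it correctly.
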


\begin{corollary}
\label{corollary2}
Let $a(t)$ satisfy \eqref{eqvv3}. 
Let $F$ and $f_\delta$ be as in Corollary~\ref{corollary1}. 
Assume that $u_0$ be an element of $H$ such that $\|F(u_0)-f_\delta\|>C\delta^\zeta>\delta$. 
Assume in addition that $u_0$ satisfies either 
\begin{equation}
\label{eqjv1}
\|F(u_0) + a(0)(u_0-\bar{u}) -f_\delta\| \le p\|V_\delta(0)\|,\qquad 0<p<1-\frac{q}{1-2q},
\end{equation}
or
\begin{equation}
\label{eqjjvv1}
\|F(u_0) + a(0)(u_0-\bar{u}) -f_\delta\| \le \theta \delta^\zeta,\qquad 0<\theta<C. 
\end{equation}
where $C>0$ and $0<\zeta\le 1$ are constants from Corollary~\ref{corollary2}. 
Let $t_\delta$ be defined by \eqref{ehjk2}. 
Then
\begin{equation}
\label{2eqv4}
\lim_{\delta\to 0} t_\delta = \infty.
\end{equation}
\end{corollary}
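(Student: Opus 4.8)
The plan is to reduce Corollary~\ref{corollary2} to Theorem~\ref{theorem2} by the translation $v := u - \bar{u}$, exactly as the remarks preceding the two corollaries suggest. First I would set $F_1(v) := F(v + \bar{u})$ and record that $F_1$ inherits every structural hypothesis needed for Theorem~\ref{theorem2}. It is monotone, since $\langle F_1(v) - F_1(w), v - w\rangle = \langle F(v+\bar u) - F(w+\bar u), (v+\bar u) - (w+\bar u)\rangle \ge 0$, and it is locally H\"{o}lder continuous of the same order $\alpha>1/2$, because $\|F_1(v) - F_1(w)\| = \|F(v+\bar u) - F(w+\bar u)\| \le C_R\|v-w\|^\alpha$ on the appropriately translated ball. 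Moreover, if $y^*$ is the solution of \eqref{aeq1} closest to $\bar u$, then $F_1(y^*-\bar u) = F(y^*) = f$, so the equation $F_1(v)=f$ is solvable, and its minimal-norm solution is precisely $y^* - \bar u$.

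Next I would substitute $v_\delta(t) := u_\delta(t) - \bar u$ into the Cauchy problem \eqref{eqv5}. Since $\dot v_\delta = \dot u_\delta$, equation \eqref{eqv5} becomes $\dot v_\delta = -(F_1(v_\delta) + a(t)v_\delta - f_\delta)$ with $v_\delta(0) = u_0 - \bar u$, which is exactly the DSM \eqref{3eq12} written for $F_1$. Global existence of $v_\delta(t)$ then follows from Lemma~\ref{lemma9.4} applied to $F_1$. The crucial observation is that the discrepancy is invariant under the translation: $F(u_\delta(t)) = F(v_\delta(t)+\bar u) = F_1(v_\delta(t))$, hence $\|F(u_\delta(t)) - f_\delta\| = \|F_1(v_\delta(t)) - f_\delta\|$. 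Consequently the stopping time $t_\delta$ determined by \eqref{ehjk2} coincides with the stopping time for the $F_1$-problem defined through \eqref{2eq3xx}, and the quantity $V_\delta(0)$ has the same meaning in both settings, as it solves $F_1(V_\delta)+a(0)V_\delta - f_\delta = 0$.

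It then remains to check that the admissibility conditions on $u_0$ transfer correctly. Since $F_1(v_\delta(0)) + a(0)v_\delta(0) - f_\delta = F(u_0) + a(0)(u_0-\bar u) - f_\delta$, the left-hand sides of \eqref{eqjv1} and \eqref{eqjjvv1} are precisely the quantities controlled by \eqref{eqj1} and \eqref{eqj1xxx} for $F_1$; thus the hypothesis on the initial data required by Theorem~\ref{theorem2} is met. With $a(t)$ satisfying \eqref{eqvv3} by assumption, all the hypotheses of Theorem~\ref{theorem2} are now verified for $F_1$, and I would invoke that theorem to conclude $\lim_{\delta\to 0} t_\delta = \infty$, which is \eqref{2eqv4}.

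The only point requiring genuine care — the main (mild) obstacle — is ensuring that the local H\"{o}lder condition \eqref{eqholder} is applied on the correct ball. Theorem~\ref{theorem1} invokes it on $B(y,R)$, where $y$ is the minimal-norm solution of the relevant equation; after translation this ball is centered at $y^*-\bar u$ for $F_1$, i.e.\ at $y^*$ for $F$. Because the translation $v\mapsto v+\bar u$ is an isometry of $H$, the H\"{o}lder estimate carries over verbatim, and the convergence in \eqref{ehjk4} is to $y^*$ rather than $y$. Hence no real difficulty arises, and the entire argument amounts to the bookkeeping of the change of variables.
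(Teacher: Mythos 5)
Your proposal is correct and coincides with the paper's own argument: the paper obtains Corollary~\ref{corollary2} precisely by applying Theorem~\ref{theorem2} to the translated operator $F_1(u)=F(u+\bar{u})$, noting that \eqref{eqv5} is the DSM \eqref{3eq12} for $F_1$ in the variable $u-\bar{u}$, which is exactly your change of variables. The only remark worth adding is that \eqref{eqjv1} as printed lacks the factor $a(0)$ on the right-hand side compared with \eqref{eqj1} --- evidently a typographical slip in the paper that your reading silently corrects by matching it to the hypothesis of Theorem~\ref{theorem2}.
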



\end{document}